\def\Re{\mathbb R}
\providecommand{\remove}[1]{}
\theoremstyle{plain}
\newtheorem{theorem}{Theorem}[section]
\newtheorem{lemma}[theorem]{Lemma}
\newtheorem{corollary}[theorem]{Corollary}
\theoremstyle{definition}
\newtheorem{definition}[theorem]{Definition}
\theoremstyle{remark}
\newtheorem{remark}[theorem]{Remark}
\newcommand{\cardin}[1]{\lvert {#1} \rvert}
\begin{document}

\title{On Totally Positive Matrices and Geometric Incidences}

\author{%
 Miriam Farber\thanks{Mathematics department, Technion – Israel Institute of Technology,
Haifa, IL-32000, Israel. {\tt miriamf@tx.technion.ac.il}} \and
\and Saurabh Ray\thanks{Mathematics department, Ben-Gurion University,
Be'er Sheva 84105, Israel. {\tt saurabh@math.bgu.ac.il}} \and Shakhar
Smorodinsky\thanks{Mathematics department, Ben-Gurion University,
Be'er Sheva 84105, Israel. {\tt shakhar@math.bgu.ac.il}}}
\date{}
\maketitle

\begin{abstract}
A matrix is called totally positive if every minor of it is positive. Such matrices are well studied and have numerous applications in Mathematics and Computer Science. We study how many times the value of a minor can repeat in a totally positive matrix and show interesting connections with incidence problems in combinatorial geometry. We prove that the maximum possible number of repeated $d \times d$-minors in a $d \times n$ totally-positive matrix is $O(n^{d-\frac{d}{d+1}})$. For the case $d=2$ we also show that our bound is optimal. We consider some special families of totally postive matrices to show non-trivial lower bounds on the number of repeated minors. In doing so, we arrive at a new interesting problem: How many unit-area and axis-parallel rectangles can be spanned by two points in a set of $n$ points in the plane? This problem seems to be interesting in its own right especially since it seem to have a flavor of additive combinatorics and relate to interesting incidence problems where considering only the topology of the curves involved is not enough. We prove an upper bound of $O(n^{\frac{4}{3}})$ and provide a lower bound of $n^{1+\frac{1}{O(\log\log n)}}$.
\end{abstract}

\section{Introduction and preliminaries}
\subsection{Incidences between points and curves}
Let $L$ be a set of $n$ distinct lines and let $P$ be a set of $m$ distinct points in the plane.
Let $I(P,L)$ denote the number of incidences between points in $P$ and lines in $L$ i.e.,
$$
I(P,L) = \cardin{ \{(p,l): p \in P, l \in L, p \in l  \} }
$$
and put $i(m,n) = max_{|P|=m,|L|=n}\{I(P,L)\}$.
Erd{\H o}s provided a construction showing the lower bound $i(m,n) = \Omega((mn)^{2/3} +m + n)$ and also conjectured that this is also the asymptotic upper-bound.
His conjecture was settled by Szemer{\' e}di and Trotter in their 1983 seminal paper \cite{ST83}:

\begin{theorem}[\cite{ST83}]\label{lines-inc}
$i(m,n)= O(m^{\frac{2}{3}}n^{\frac{2}{3}} + m + n)$.
\end{theorem}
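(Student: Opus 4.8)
The plan is to give the short proof due to Sz{\'e}kely, based on the \emph{crossing number inequality}: every simple graph $G$ with $V$ vertices and $E \ge 4V$ edges satisfies $\mathrm{cr}(G) = \Omega(E^3/V^2)$, where $\mathrm{cr}(G)$ is the minimum number of crossing pairs of edges over all planar drawings of $G$. I would recall this inequality, either as a black box or via its standard two-step derivation: first the Euler-formula bound $\mathrm{cr}(G) \ge E - 3V$ valid for any graph, then amplification by keeping each vertex of $G$ independently with a suitable probability $p$ and taking expectations of the number of surviving vertices, edges, and crossings.

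Given point set $P$ and line set $L$ with $I(P,L) = i(m,n)$, I would first discard every line of $L$ incident to fewer than two points of $P$; this loses at most $n$ incidences. For each surviving line $\ell$, list the points of $P$ on $\ell$ in the order they appear along $\ell$ and join consecutive points by an edge drawn along the segment of $\ell$ between them. A line carrying $k_\ell$ points then contributes $k_\ell - 1$ edges, so we obtain a graph $G$ drawn in the plane with $V = m$ vertices and $E = \sum_\ell (k_\ell - 1) \ge I(P,L) - n$ edges. Since two distinct points lie on a unique common line, no two of these edges share both endpoints, so $G$ is a simple graph.

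It then remains to bound $\mathrm{cr}(G)$ from above in the given drawing: two distinct lines meet in at most one point, so two edges coming from different lines cross at most once and edges lying along the same line never cross, giving $\mathrm{cr}(G) \le \binom{n}{2} < n^2/2$. If $E \ge 4m$, the crossing lemma yields $c\,E^3/m^2 \le \mathrm{cr}(G) < n^2/2$ for an absolute constant $c>0$, hence $E = O(m^{2/3}n^{2/3})$ and $I(P,L) = O(m^{2/3}n^{2/3} + n)$; if instead $E < 4m$, then trivially $I(P,L) < 4m + n$. In either case $i(m,n) = I(P,L) = O(m^{2/3}n^{2/3} + m + n)$. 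The only genuine ingredient here is the crossing lemma itself; the rest is bookkeeping. The main point to be careful about is the low-incidence regime $E < 4m$, which is exactly what forces the additive $m+n$ term, together with the harmless pruning of lines containing at most one point so that $G$ is a well-defined simple graph.
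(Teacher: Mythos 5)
Your argument is correct, but there is nothing in the paper to compare it against: Theorem~\ref{lines-inc} is stated as a known result and cited to Szemer\'edi--Trotter, with no proof given; the surrounding text only remarks that simpler proofs exist, due to Clarkson et al.\ and to Sz\'ekely. What you have written is precisely Sz\'ekely's crossing-number proof, and it is sound: the drawing along the lines is a proper drawing of a simple graph (two points span a unique line, so no parallel edges), each pair of lines contributes at most one crossing, so the crossing number is below $n^2/2$, and the crossing lemma plus the case split $E \ge 4m$ versus $E < 4m$ gives $O(m^{2/3}n^{2/3}+m+n)$. One small bookkeeping remark: after discarding lines with at most one point, the cleanest count is $I(P,L) \le E + n$ (incidences on surviving lines total $E$ plus the number of surviving lines, and each discarded line carries at most one incidence), which is the inequality you actually use; your phrase ``$E \ge I(P,L)-n$ edges'' is exactly this and is fine. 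What your route buys over a bare citation is a self-contained, short proof whose only external ingredient is the crossing lemma, at the cost of importing that lemma (or its standard probabilistic amplification of the Euler bound $\mathrm{cr}(G)\ge E-3V$); for the purposes of this paper the black-box citation suffices, since the theorem is used only as an off-the-shelf incidence bound.
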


The Szemer{\' e}di  and Trotter bound can be generalized and be extended in many ways. In this paper we use one of these extensions to the so-called families of pseudolines:

\begin{definition}
A family $\Gamma$ of $n$ simple Jordan curves in the plane is said to be a family of {\em pseudolines}
if every pair of curves in $\Gamma$ intersect at most once.
\end{definition}

The original proof of Szemer{\' e}di and Trotter is rather involved and contains a huge constant hidden in the $O$-notation. A considerably simpler proof was obtained by Clarkson et al. in \cite{Clarkson-90}. Their paper contains also an extension of the same upper bound for points and a family of pseudolines. Another surprisingly simple proof of the same bound was obtained by Sz{\'e}kely \cite{Szekely97}. For more on geometric incidences, we refer the reader to the survey of Pach and Sharir \cite{PachSharir}.

\subsection{Totally positive matrices}
 Let $A$ be an $m \times n$ matrix i.e., $A$ has $m$ rows and $n$ columns. For two sets of indices $I \subset \{1,\ldots,m\}$ and $J \subset \{1,\ldots,n\}$ with the same cardinality, the minor $\Delta_{I,J}$ is defined to be $\Delta_{I,J}= det( A_{I,J})$ where $A_{I,J}$ is the submatrix of $A$ corresponding to the row set $I$ and the column set $J$. An $m\times n$ matrix is called totally-positive, ($TP$ in short), if every minor of it is positive. We call a matrix $TP_d$ if all its $i\times i$ minors are positive for $1 \leq i \leq d$.

TP-matrices play an important role in many areas of mathematics and computer science, including  discrete mathematics, probability and stochastic processes, and representation theory \cite{Fallat, Gasca}.
 Besides being positive, the minors in TP-matrices have beautiful combinatorial interpretations
 (See, e.g.,\cite{Brenti,Lind}). $TP_2$ matrices also relate to the so-caled {\em Monge} and {\em anti-Monge matrices}. A matrix is called strict monge (anti-Monge) matrix if for every $2\textrm{-by-}2$ submatrix, the sum of the diagonal entries is smaller than ( bigger than ) the sum of the off-diagonal entries. A matrix $A$ is $TP_2$ if and only if the matrix obtained by taking the entrywise logarithm of $A$ is strict anti-Monge. Monge matrices and their generalizations were widely studied in mathematics and computer science, see, e.g., \cite{Katarina,Rudolf,Tiskin}.

 Here, we further investigate the properties of minors of TP-matrices. We are primarily interested in the following question: Given a $d \times n$ TP-matrix, what can be said about the number of equal minors of a given order in such a matrix.
 In \cite{TPequal}, the authors provide a sharp asymptotic bounds in the case of $1\times 1$ minors. Namely, they prove that the maximum possible number of equal entries in an $n\times n$ $TP$-matrix is $\Theta(n^{4/3})$. They also discuss the positioning of equal entries in such a matrix and obtain relations to Bruhat order of permutations.
 %Their proof is based on a result from combinatorial geometry regarding the number of incidences between pseudolines and points.
  They also leave the following question as an open problem: what is the maximum possible number of equal $2\times 2$ minors in a $d \textrm{-by-} n$ $TP$-matrix? We now recall several definitions and results that obtained in \cite{TPequal} for the case $d=2$. Let $\alpha_A = \big\{\{i,j\} | det(A[{1,2}|{i,j}])=\alpha \big\}$, where $A$ is a $2\textrm{-by-}n$ $TP$ matrix.
  A graph $G$ of order $n$ is called $TP$ attainable if there exists a labeling of its vertices, a
$2\textrm{-by-}n$ $TP$ matrix $A$ and a positive number $\alpha$ such that $E(G)\subseteq \alpha_A$.
\begin{theorem}[\cite{TPequal}]\label{planar}
Let $G$ be an outerplanar graph. Then $G$ is $TP$ attainable graph.
\end{theorem}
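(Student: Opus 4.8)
The plan is to reduce to maximal outerplanar graphs and then give an explicit ``mediant''/Stern--Brocot construction. Recall the classical facts that a graph is maximal outerplanar if and only if it is a triangulation of a convex polygon, and that every outerplanar graph $G$ on $n$ vertices is a spanning subgraph of some maximal outerplanar graph $G'$ on the same $n$ vertices. Since, for a fixed matrix $A$ and a fixed $\alpha$, the set $\alpha_A$ of $\alpha$-attaining pairs is determined, $E(G')\subseteq\alpha_A$ implies $E(G)\subseteq\alpha_A$. Hence it suffices to prove the statement for a triangulation $G'$ of a convex $n$-gon, and we will do so with $\alpha=1$.

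Identify the columns of a $2\times n$ matrix with points $p_1,\dots,p_n\in\Re^2$. For a $2\times n$ matrix the only minors are the entries and the $2\times 2$ determinants, so the matrix is $TP$ exactly when every $p_i$ lies in the open positive quadrant and the slopes $t_i=(p_i)_y/(p_i)_x$ are strictly increasing in $i$; in that case the $\{i,j\}$-minor (for $i<j$) is $\det(p_i,p_j)$. The one computation we need is this: if $p,q$ lie in the positive quadrant with $\det(p,q)=1$, then the mediant $p+q$ also lies in the positive quadrant, its slope lies strictly between those of $p$ and $q$ (the mediant inequality), and $\det(p,p+q)=\det(p+q,q)=1$, using $\det(p,p)=\det(q,q)=0$.

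Now exploit the recursive structure of a triangulated polygon. Choose a boundary edge $\{a,b\}$ of $G'$, write the cyclic vertex order as $a=w_0,w_1,\dots,w_{n-1}=b$, and fix the labeling $w_i\mapsto i+1$. Set $p_a=(1,1)$, $p_b=(1,2)$, so that $\det(p_a,p_b)=1$. In general, given a sub-polygon with boundary $x_0,x_1,\dots,x_m$ (in cyclic order) whose root edge $\{x_0,x_m\}$ has already been placed with $\det(p_{x_0},p_{x_m})=1$ and $t_{x_0}<t_{x_m}$, let $x_j$ be the apex of the unique triangle of $G'$ inside this sub-polygon having $\{x_0,x_m\}$ as an edge; define $p_{x_j}:=p_{x_0}+p_{x_m}$ and recurse on the two sub-polygons $x_0,\dots,x_j$ and $x_j,\dots,x_m$ (with base case a single edge, where nothing is done). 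By the mediant computation, the new edges $\{x_0,x_j\}$ and $\{x_j,x_m\}$ again have minor $1$ with the correct slope orientation, so the recursion is well-posed; every point placed lies in the positive quadrant; and, by induction on the number of vertices of the sub-polygon, $t_{x_0}<t_{x_1}<\dots<t_{x_m}$. Applying this to the whole polygon gives $t_1<t_2<\dots<t_n$, so the resulting matrix $A$ is $TP$, its column order matches the labeling, and every edge of $G'$ — each is $\{a,b\}$ or is the root edge of some sub-polygon, hence received minor $1$ at some step and was never altered — lies in $1_A$. Thus $E(G')\subseteq 1_A$, which proves the theorem; incidentally, the columns of $A$ are exactly the Stern--Brocot vectors of the fractions between $1/1$ and $2/1$ that arise in the triangulation, so $A$ has positive integer entries.

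The two places that need care are: (i) the reduction in the first paragraph, where one should spell out that an arbitrary outerplanar graph embeds as a subgraph into a triangulation of a convex polygon on the same vertex set (standard, via making the graph $2$-connected inside an outerplanar embedding and then adding non-crossing chords); and (ii) the inductive claim $t_{x_0}<\dots<t_{x_m}$, which is the technical heart and is where the mediant inequality is used essentially. I expect (ii) to be the main obstacle to write out cleanly, though each individual step is an immediate consequence of $\det(p,p+q)=\det(p,q)$ and the mediant inequality; everything else is routine.
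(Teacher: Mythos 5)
Your proposal is correct, but note that the paper you are being compared against does not prove this statement at all: Theorem~\ref{planar} is imported verbatim from \cite{TPequal} and used as a black box, so there is no in-paper argument to match yours against. Taken on its own terms, your argument is sound: the reduction to a maximal outerplanar graph (a triangulation of a convex $n$-gon containing $G$ as a spanning subgraph) is legitimate because $E(G)\subseteq E(G')\subseteq \alpha_A$ is inherited; the characterization of $2\times n$ TP matrices as positive-quadrant columns with strictly increasing slopes is exactly right; and the mediant step $p_{x_j}:=p_{x_0}+p_{x_m}$ does preserve the two invariants you need, namely $\det(p_{x_0},p_{x_j})=\det(p_{x_j},p_{x_m})=\det(p_{x_0},p_{x_m})=1$ by bilinearity and the strict betweenness of the mediant's slope. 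The inductive claim $t_{x_0}<t_{x_1}<\cdots<t_{x_m}$ that you flag as the delicate point goes through by induction on the size of the sub-polygon, since the two recursive calls produce increasing slope sequences that share the endpoint $x_j$, and every edge of the triangulation is the root edge of exactly one call (or the initial edge $\{a,b\}$), so its minor is $1$ in the correct column order and is never modified afterwards. Your Stern--Brocot construction in fact gives a bit more than the statement requires: it attains $\alpha=1$ with positive integer entries and realizes all $2n-3$ edges of the triangulation, which immediately recovers the corollary stated after the theorem. The only cosmetic omissions are the trivial cases $n\le 2$ and a sentence justifying that any outerplanar graph extends to a maximal one on the same vertex set, both standard.
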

As a corollary, they obtain the following:
\begin{corollary}
The maximal number of equal $2\textrm{-by-}2$ minors in a $2\textrm{-by-}n$ $TP$ matrix is at least $2n-3$.
\end{corollary}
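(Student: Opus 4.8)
The plan is to deduce this immediately from \lemref{planar} by exhibiting an outerplanar graph on $n$ vertices with exactly $2n-3$ edges. The natural candidate is a \emph{maximal outerplanar graph}, i.e., a triangulation of a convex $n$-gon: its vertices lie on the outer face, so it is outerplanar, and a standard Euler-formula count shows it has $n$ boundary edges together with $n-3$ non-crossing diagonals, for a total of $2n-3$ edges (assuming $n\ge 3$).

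Concretely, I would first fix $n\ge 3$ and let $G$ be any such maximal outerplanar graph, noting $\cardin{E(G)}=2n-3$. Next, I would invoke \lemref{planar}: since $G$ is outerplanar, it is $TP$ attainable, so there exist a labeling of the vertices of $G$ by $\{1,\dots,n\}$, a $2\textrm{-by-}n$ $TP$ matrix $A$, and a positive number $\alpha$ such that $E(G)\subseteq \alpha_A$. By the definition of $\alpha_A$, this means that for every edge $\{i,j\}\in E(G)$ we have $\det(A[\{1,2\}\,|\,\{i,j\}])=\alpha$. Hence the number of (unordered) pairs of columns of $A$ whose $2\textrm{-by-}2$ minor equals $\alpha$ is at least $\cardin{E(G)}=2n-3$, which gives the claimed lower bound on the maximal number of equal $2\textrm{-by-}2$ minors in a $2\textrm{-by-}n$ $TP$ matrix.

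There is essentially no obstacle here: the entire content is packaged in \lemref{planar}, and the only thing to verify is the combinatorial fact that a maximal outerplanar graph on $n$ vertices has $2n-3$ edges, which is elementary. The one point to be careful about is bookkeeping — that distinct edges of $G$ correspond to distinct pairs of columns of $A$, so that the $2n-3$ edges really do witness $2n-3$ distinct minors all equal to $\alpha$ — but this is immediate from the fact that a $2\textrm{-by-}2$ minor of $A$ is indexed by an unordered pair of column indices.
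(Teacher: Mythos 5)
Your proposal is correct and is essentially the argument the paper (following \cite{TPequal}) intends: the corollary is deduced from Theorem~\ref{planar} by taking a maximal outerplanar graph (a triangulation of a convex $n$-gon) with $2n-3$ edges, each edge witnessing a distinct pair of columns whose minor equals $\alpha$. No gaps; the bookkeeping point you raise is indeed immediate.
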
                                                                                                                                                    In this paper, we obtain a full solution for the case $d=2$. We find the maximal number of equal $2\textrm{-by-}2$ minors in a $2\textrm{-by-}n$ $TP$ matrix, and we also present a method to construct such matrices. We also provide various upper and lower bounds in several special cases for other values of $d$. Relations to problems from additive combinatorics are also presented. We use various types of geometric incidences results. Those include point-lines, point-pseudolines and point-hyperplanes incidences. Furthermore, we show a fascinating relation between the problem of bounding the number of equal $2\times 2$ minors in a special class of $n\times n$ $TP_2$-matrices (referred to as grid-matrices) to an interesting geometric problem of bounding the number of unit-area axis-parallel rectangles in a planar (non-uniform) $n\times n$ grid.

\subsection{Paper organization}
In the next section, we discuss the number of equal $2 \times 2$ minors in a $2\times n$ TP-matrix. We obtain an upper bound on this number, and show that this bound is asymptotically tight. In section 3, we present an upper bound on the number of $d \times d$ equal minors in a $d\times n$ TP-matrix. We also give a lower bound for the number of distinct $d \times d$ minors in such a matrix for the cases in which $d << n$. In section 4, we discuss a special class of $n \times n $ $ TP_2$ matrices and use point-pseudolines incidences in order to provide an upper bound on the number of equal $2 \times 2$ minors in such matrices. We also provide an upper bound and lower bounds on the number of unit-area and axis-parallel rectangles that a set of $n$ points in the plane can span. In section 5 we discuss relations between our problems and problems from additive combinatorics and present some open problems.

\section{Totally-Positive matrices $2\times n$}
We now prove sharp asymptotic bounds on the maximum possible number of $2 \times 2$ equal minors in a $2\times n$ TP-matrix.

\begin{theorem}\label{two_by_n}
Let $A$ be a $2\times n$ TP-matrix. Then the maximum number of equal $2\times 2$ minors in $A$ is $O(n^{4/3})$. This bound is asymptotically tight in the worst case.
\end{theorem}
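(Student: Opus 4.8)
The plan is to prove both halves by translating the question into one about point--line incidences. Write the columns of $A$ as vectors $v_1,\dots ,v_n\in\mathbb R^2$. Total positivity says exactly that every $v_i$ lies in the open positive quadrant and that, once the columns are sorted so that the slopes of the $v_i$ (measured from the origin) are strictly increasing, every $2\times 2$ minor $\det(v_i\mid v_j)$ with $i<j$ is positive. Since all the minors are positive we may divide the second row by the target value $\alpha$ and assume from now on that we are counting pairs with $\det(v_i\mid v_j)=1$.

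For the \textbf{upper bound}, associate to each column $v_i=(a_i,b_i)$ the line $\ell_i=\{(x,y):a_iy-b_ix=1\}$, i.e. $\ell_i=\{w:\det(v_i\mid w)=1\}$. Then $v_j\in\ell_i$ iff $\det(v_i\mid v_j)=1$, and since every $2\times 2$ minor of a $TP$ matrix is positive this forces $i$ to precede $j$ in the slope order; in particular $v_j\notin\ell_j$. Total positivity also forces the $v_i$ to be pairwise distinct and the $\ell_i$ to be pairwise distinct (if $v_i=\lambda v_j$ with $i\neq j$ some minor vanishes or the offsets disagree). Hence the number of $2\times 2$ minors equal to $1$ is exactly the number of incidences between the $n$ points $\{v_i\}$ and the $n$ lines $\{\ell_i\}$, which by Theorem~\ref{lines-inc} with $m=n$ is $O(n^{2/3}n^{2/3}+n)=O(n^{4/3})$.

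For the \textbf{lower bound} I would reverse this correspondence, starting from the classical Erd\H{o}s/Elekes configuration that makes Szemer\'edi--Trotter tight: a set $\Pi$ of $\Theta(n)$ points and a set $\Lambda$ of $O(n)$ lines with $\Omega(n^{4/3})$ incidences, e.g. the grid $\{1,\dots ,n^{1/3}\}\times\{1,\dots ,n^{2/3}\}$ together with the lines $y=sx+t$ for $s\le n^{1/3}/2$, $t\le n^{2/3}/2$, each carrying $n^{1/3}$ grid points. A line of slope $\sigma>0$ and $y$-intercept $\tau>0$ equals $\{w:\det(q\mid w)=1\}$ for the single ``dual'' point $q=(1/\tau,\ \sigma/\tau)$, so each line of $\Lambda$ is encoded by a point. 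Taking the columns of our matrix to be $\Pi$ together with the dual points of the lines in $\Lambda$, every one of the $\Omega(n^{4/3})$ incidences $p\in\ell$ turns into a pair of columns whose $2\times 2$ minor equals $1$, and since that minor is positive the dual point automatically precedes $p$ in the slope order, so this really is an equal minor of the sorted matrix. The only thing that can fail is total positivity itself: the raw Elekes configuration has many point pairs collinear with the origin, and its lines (hence their dual points) use only $n^{1/3}$ distinct slopes, so the columns would not have pairwise distinct slopes. This is the main obstacle, and I would dispose of it by first re-embedding the (purely combinatorial) incidence pattern via a projective transformation that sends the line at infinity to a generic line, and then applying a generic affine map. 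Incidences are preserved, a generic choice destroys all accidental parallelisms and collinearities (so that all $\Theta(n)$ lines acquire distinct slopes and no two of the $\Theta(n)$ points of $\Pi$ lie on a line through the origin), and a further rotation, scaling and translation places everything in the positive quadrant with all line slopes and all line intercepts positive, so the dual points are well defined, lie in the positive quadrant, and have slopes that are pairwise distinct and distinct from those of $\Pi$. Crucially, a linear map $T$ sends $\{w:\det(q\mid w)=1\}$ to $\{w:\det(Tq\mid w)=\det T\}$, so the ``line $\leftrightarrow$ dual point'' correspondence survives linear changes of coordinates (only the common minor value changes). After these reductions the columns form a bona fide $2\times N$ $TP$ matrix with $N=\Theta(n)$ and $\Omega(n^{4/3})=\Omega(N^{4/3})$ minors equal to a common value, matching the upper bound.

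I expect the genericity bookkeeping in the lower bound --- verifying that one transformation simultaneously forces distinct slopes among all $\Theta(n)$ dual points and points of $\Pi$, positivity of all coordinates, and positivity of all $\Theta(n)$ intercepts --- to be the only real work; the upper bound is immediate once the incidence reformulation is in place.
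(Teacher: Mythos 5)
Your proposal is correct and follows essentially the same route as the paper: the upper bound via the duality $v_i\mapsto\{(x,y):a_iy-b_ix=1\}$ and Szemer\'edi--Trotter, and the lower bound by reversing that duality on an extremal point--line incidence configuration, where your dual point $q=(1/\tau,\sigma/\tau)$ is exactly the paper's point at distance $1/d$ on the parallel line through the origin, and your genericity conditions (distinct slopes, positivity of coordinates, slopes and intercepts, no point of the configuration collinear with the origin or on a parallel through the origin) match the paper's list of constraints obtained by affine and projective transformations.
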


\begin{proof}
Assume without loss of generality that the value of such a minor is $1$ (since all the $2\times 2$ minors are nonzero, we can scale the matrix to have this property). Let $P=\{(x_i,y_i)| i=1,\ldots,n\}$ denote the column vectors of $A$. That is $x_i=A_{1 i}$ and $y_i=A_{2 i}$. We think of the $p_i$'s ($p_i=(x_i,y_i)$) as points in the plane.
Abusing the notation, we denote by $(p_i,p_j)$ the $2\times 2$ matrix whose first column is $p_i$ and the second column is $p_j$.
We define a set $L=\{l_1,\ldots,l_n\}$ of $n$ distinct lines as follows; For each point $p \in P$, $p=(a,b)$, let $l_p = \{(x,y) \in \Re^2| ay-bx=1\}$. Note that since $a$ and $b$ are entries in a totally positive matrix, both of them are nonzero, and hence $l_p$ is indeed a line. In addition, notice that for a given pair of indices $1\leq i < j\leq n$, the equality $\det(p_i,p_j)=1$ holds if and only if the point $p_j$ is incident with the line $l_{p_i}$. Thus, the number of $2\times 2$ minors of $A$ that are equal to $1$ is bounded from above by the number of incidences $I(P,L)$.
Hence, by Theorem~\ref{lines-inc} this number is bounded by $O(n^\frac{4}{3})$.

Next, we prove that the above upper-bound is asymptotically tight. For any $n$, we construct a $2 \times n$ totally positive matrix with $\Omega(n^\frac{4}{3})$ equal $2 \times 2$  minors.
We use the lower bound construction for incidences between $n/2$ points and $n/2$ lines with $\Omega(n^\frac{4}{3})$ many incidences. Let $P$ denote the set of points in such a construction and let $L$ be the set of lines. It is easy to verify that one can transform such a construction (using affine and projective transformations) to a construction satisfying the following constraints:
\begin{enumerate}
  \item No two lines are parallel.
  \item The slope of each line is positive.
  \item The $y$-intercept of each line is positive.
  \item every two points are linearly independent (as vectors in $\Re^2$).
  \item For every line $\ell$ in the construction, the line $\ell'$ parallel to $\ell$ and passing through the origin does not pass through any point of $P$.
  \item All the points lie in the first quadrant.
\end{enumerate}
Let $P'$ be an additional set of $n/2$ points constructed as follows: Fix a line $l \in L$ and let $d$ be the distance of the origin from $l$. We define the point $p_l$ to be the point on a line parallel to $l$ passing through the origin and having distance $1/d$ from the origin, such that $p_l$ lies in the first quadrant (the existence of such a point is guaranteed from (2) and (3)). Note that from (1), for each pair of distinct lines $l_1,l_2 \in L$, we have that $p_{l_1}$ and  $p_{l_2}$ are linearly independent. In addition, from (3) and the definition of $p_l$, for any point $p \in P$ incident with the line $l$, we have $\det(p_l,p)=1$. Put $Q=P \cup P'$. Order the points in $Q$ by $q_1,q_2, \ldots, q_n$, according to the non-decreasing slopes $\theta_1 \leq \theta_2 \leq \ldots \leq \theta_n$ of the lines passing through the origin and the $q_i$'s. We claim that these slopes are, in fact, monotonically increasing. Namely, that for $i \neq j$ , $\theta_i \neq \theta_j$. To see this, consider the following three possible cases.\\

$\left\{
\begin{array}{ll}
 p_{i}, p_{j} \in P , & \hbox{from (4) we have $\theta_i \neq \theta_j$;} \\
 p_{i}, p_{j} \in P', & \hbox{from (1), and the way we constructed the points in P' we have $\theta_i \neq \theta_j$;} \\
p_{i} \in P, p_{j} \in P', & \hbox{from (5) we have $\theta_i \neq \theta_j$.}
                                                                               \end{array}
                                                                             \right.$
\\

\noindent Let $A$ be the 2-by-$n$ matrix whose $i^{th}$ column is $\left(
                                                                                                                                     \begin{array}{c}
                                                                                                                                       x_i \\
                                                                                                                                       y_i \\
                                                                                                                                     \end{array}
                                                                                                                                   \right)
$ where $q_i=(x_i,y_i)$. Note that from (6) and the construction of $A$, we have that $A$ is a TP matrix, and it has at least $I(P,L)$ $2 \times 2$ minors that are equal to 1. However, $I(P,L)=\Omega(n^\frac{4}{3})$. This completes the lower-bound construction.
\end{proof}

\section{Incidences between Points and Hyperplanes in $\Re^d$ and $TP_d$}

\subsection{Repeated Minors}
Here, we prove that the number of equal $d\times d$ minors in a $d\times n$ TP-matrix is $O(n^{d-\frac{d}{d+1}})$. Note that this a generalization of the first part of Theorem~\ref{two_by_n}, since for $d=2$, $n^{d-\frac{d}{d+1}}=n^{\frac{4}{3}}$.
We need the following theorem of Apfelbaum and Sharir \cite{AS07} regarding incidences between points and hyperplanes in $\Re^d$.

\begin{theorem}\cite{AS07}\label{hyperplanes-inc}
If the point-hyperplane incidence graph of $m$ points and $n$ hyperplanes in $\Re^d$ does not contain a $K_{r,s}$ as a subgraph
for some fixed $r,s$ then the number of point-hyperplane incidences is $O((mn)^{1-\frac{1}{d+1}}+m+n)$  where the constant of proportionality depends on $r$ and $s$.
\end{theorem}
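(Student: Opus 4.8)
The plan is to prove the bound by induction on the dimension $d$, using the K{\H o}v{\' a}ri--S{\' o}s--Tur{\' a}n theorem (hereafter KST) for the base estimates and a cutting of the hyperplanes to drive the induction. Write $f_d(m,n)$ for the maximum number of point--hyperplane incidences over all configurations of $m$ points and $n$ hyperplanes in $\Re^d$ whose incidence graph omits $K_{r,s}$, with $r,s$ fixed; the goal is $f_d(m,n)=O\pth{(mn)^{1-\frac1{d+1}}+m+n}$, the constant allowed to depend on $r,s,d$. The base case $d=1$ is immediate: the incidence graph of distinct points and distinct ``hyperplanes'' on a line is a partial matching, so $f_1(m,n)\le\min(m,n)\le(mn)^{1/2}$. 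Moreover KST supplies the weak estimates $f_d(m,n)=O\pth{n\,m^{1-1/s}+m}$ and $f_d(m,n)=O\pth{m\,n^{1-1/r}+n}$, and combining these with the linear term $m+n$ of the target reduces the problem to a balanced range of the ratio $m/n$, which is the only regime that genuinely needs the geometry.

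For the inductive step, fix a parameter $r_0$ to be optimized and take a $(1/r_0)$-cutting of the $n$ hyperplanes: a partition of $\Re^d$ into $O(r_0^{d})$ relatively open simplices, each crossed by at most $n/r_0$ of the hyperplanes, whose boundaries lie in the union of the $O(r_0^{d})$ hyperplanes $g_1,g_2,\dots$ supporting the facets of the cutting. Split $P$ into the points lying inside open cells and those lying on the cutting's boundary. If $p$ lies inside an open cell $\tau$ then every hyperplane through $p$ crosses $\tau$, so the incidences from the interior points form $\sum_{\tau}I(P_\tau,H_\tau)$ with $\sum_{\tau}|P_\tau|\le m$ and every $|H_\tau|\le n/r_0$; bounding each term either by KST or recursively by $f_d(|P_\tau|,n/r_0)$, summing over the $O(r_0^{d})$ cells, and using concavity of $x\mapsto x^{1-\frac1{d+1}}$, one finds that the powers of $r_0$ cancel exactly at the exponent $1-\frac1{d+1}$ --- this scale invariance is what singles out that exponent. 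The boundary points are treated by the induction on $d$: such a point lies on some $g_i$, and restricting to $g_i\cong\Re^{d-1}$ produces a point--hyperplane incidence problem in dimension $d-1$ (with hyperplanes $h\cap g_i$ over the input hyperplanes $h\neq g_i$) whose incidence graph is still $K_{r,s}$-free, while $g_i$ itself accounts for at most $|P\cap g_i|$ incidences. Assembling the two parts and choosing $r_0$ to balance the interior contribution $O\pth{(mn)^{1-\frac1{d+1}}}$ against the sum, over the flats $g_i$, of the inductive $(d-1)$-dimensional bounds $O\pth{(|P\cap g_i|\cdot|H_i|)^{1-\frac1d}+|P\cap g_i|+|H_i|}$ then yields the theorem.

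The hard part is the boundary bookkeeping. A single point can lie on many of the flats $g_i$, and the restricted hyperplane sets of different flats overlap heavily, so the lower-dimensional incidences must be charged with care --- e.g. by using a cutting whose facets are supported on few distinct flats, and by invoking the $K_{r,s}$-free hypothesis to cap the count when many hyperplanes are nearly coincident on a flat through which many points of $P$ pass. A related nuisance is that, precisely because the relevant power of $r_0$ cancels, the recursion is scale invariant rather than contracting, so it must be unrolled for only a bounded number of levels, the residual sub-instances dispatched by KST, and a final check made that the accumulated constant stays bounded once $r$, $s$, and $d$ are fixed.
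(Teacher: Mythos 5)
The paper does not prove this statement: it is imported verbatim from Apfelbaum and Sharir \cite{AS07} and used as a black box, so there is no internal proof to compare yours against and the proposal must stand on its own. As written it does not. You have assembled the right-looking toolkit (K\H{o}v\'ari--S\'os--Tur\'an, $(1/r_0)$-cuttings, induction on dimension), but the two items you defer as ``bookkeeping'' are where the theorem actually lives, and one of them is a structural obstruction, not a nuisance. Concretely: each hyperplane crosses $\Theta(r_0^{d-1})$ of the $O(r_0^d)$ cells, so $\sum_\tau |H_\tau| = \Theta(n\,r_0^{d-1})$, and the additive $O(|H_\tau|)$ terms from either KST or the recursive bound already contribute $\Omega(n\,r_0^{d-1})$ at one level. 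In the balanced range $m=n$ this forces $r_0 \le n^{1/(d+1)}$, while making the KST main term $\sum_\tau |P_\tau|\,|H_\tau|^{1-1/r}$ fit under $n^{2d/(d+1)}$ forces $r_0 \ge n^{1-\frac{d-1}{(d+1)(1-1/r)}}$; these are compatible only when $r\le d$. Since the theorem allows arbitrary fixed $r,s$ (and for hyperplanes in $\Re^d$, $d\ge 3$, large $r,s$ is exactly the interesting case, because many points on a common $(d-2)$-flat lie on many common hyperplanes), the one-level scheme fails precisely in the regime that matters. Unrolling does not rescue it: after $L$ levels of refinement the total additive term becomes $n\,(r_0^L)^{d-1}$, so the constraint on the \emph{total} refinement $R=r_0^L$ is the same as before and no choice of $L$ closes the gap. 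The scale-invariance you correctly observe in the main term compounds this, since the recursion neither contracts nor terminates after $O_{r,s,d}(1)$ rounds.

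The argument in \cite{AS07} avoids both difficulties by taking a different route. One first shows --- using point--hyperplane duality and hierarchical cuttings, with no $K_{r,s}$ hypothesis at all --- that the entire incidence graph can be covered by complete bipartite graphs $P_i\times H_i$ with
$\sum_i\bigl(|P_i|+|H_i|\bigr)=O\bigl((mn)^{1-\frac{1}{d+1}}+m+n\bigr)$.
The forbidden-subgraph hypothesis enters only at the last step: each $P_i\times H_i$ in the cover must have $|P_i|<r$ or $|H_i|<s$, hence $|P_i|\,|H_i|\le \max(r,s)\bigl(|P_i|+|H_i|\bigr)$, and summing gives the incidence bound with a constant depending on $r,s$. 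If you want a cutting-based proof you will need to reproduce that bipartite-cover lemma (in particular, to cut in the dual as well, so that the $n\,r_0^{d-1}$ proliferation is charged to complete bipartite pieces rather than to raw incidences); the plan as stated does not close.
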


Let $A = A_{d \times n}$ be a TP-matrix, and let $P=\{p_i\}_{i=1}^n$ be the column vectors of $A$. For each ordered tuple $I$ of $(d-1)$ indices $1\leq i_1 < i_2<\cdots < i_{d-1} \leq n$, let $\pi_{I}$ be the hyperplane defined by the following linear equation:
$$
\begin{vmatrix}
  A_{1i_1} & A_{1i_2} & . & . & . & A_{1i_{d-1}}& x_1 \\
 A_{2i_1} & A_{2i_2} & . & . & . & A_{2i_{d-1}}& x_2 \\
 . & . & . & . & . & .&.\\
  . & .& . & . & . & . & .\\
  . & . & . & . & . & . & . \\
 A_{di_1} & A_{di_2} & . & . & . & A_{di_{d-1}} & x_d \\
\end{vmatrix}=1.
$$
%$$
%\begin{vmatrix}
%  A_{1i} & A_{1j} & x \\
%  A_{2i} & A_{2j} & y \\
%  A_{3i} & A_{3j} & z \\
%\end{vmatrix}=1
%$$
Note that, for a $(d-1)$-tuple of indices $I= (i_1 , i_2 , \cdots , i_{d-1})$, and a point $p_k \in P$ for which $ i_{d-1} < k$, the corresponding $d \times d$ sub-matrix whose column vectors are $p_{i_1},p_{i_2},\ldots,p_{i_{d-1}},p_{k}$ has determinant $1$ if and only if the point $p_{k}$ is incident with the plane $\pi_{I}$. We need the following lemma:
\begin{lemma}\label{hyperplanes-lemma}
Let $A = A_{d \times n}$ be a TP-matrix. Then, for two ordered tuples $I$, $J$ of $d-1$ indices such that $I \neq J$, the hyperplanes $\pi_{I}$ and $\pi_{J}$ are distinct.
\end{lemma}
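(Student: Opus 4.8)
The plan is to reduce the equality $\pi_I=\pi_J$ to an equality between the linear spans of the two column tuples, and then derive a contradiction from the positivity of the full $d\times d$ minors of $A$. First I would rewrite the defining equation of $\pi_I$ by expanding the determinant along its last column: this gives $\pi_I=\{x\in\Re^d:\langle c_I,x\rangle=1\}$, where $c_I\in\Re^d$ is the vector whose $k$-th coordinate is $(-1)^{k+d}$ times the $(d-1)\times(d-1)$ minor obtained by deleting the $k$-th row of the $d\times(d-1)$ matrix $M_I$ with columns $p_{i_1},\dots,p_{i_{d-1}}$. Two quick remarks feed the rest of the argument: (i) $c_I\neq 0$, because $M_I$ has rank $d-1$ — any $d-1$ of its rows give a $(d-1)\times(d-1)$ submatrix of $A$ whose determinant is a positive minor; (ii) for each $m$ we have $\langle c_I,p_{i_m}\rangle=\det[p_{i_1},\dots,p_{i_{d-1}},p_{i_m}]=0$, since this determinant has a repeated column. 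From (i) and (ii), $\mathrm{span}(p_{i_1},\dots,p_{i_{d-1}})\subseteq c_I^{\perp}$, and since the left side has dimension $d-1$ (again by the rank of $M_I$) and so does $c_I^{\perp}$, we get $\mathrm{span}(p_{i_1},\dots,p_{i_{d-1}})=c_I^{\perp}$; likewise for $J$.

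Next I would exploit the normalization of the right-hand side: an affine hyperplane written as $\{x:\langle a,x\rangle=1\}$ with $a\neq 0$ determines $a$ uniquely, since it avoids the origin, its normal directions form the line $\Re a$, and among the multiples $ta$ only $a$ itself produces the constant $1$. Hence $\pi_I=\pi_J$ forces $c_I=c_J$, so $c_I^{\perp}=c_J^{\perp}$, i.e.\ $\mathrm{span}(p_{i_1},\dots,p_{i_{d-1}})=\mathrm{span}(p_{j_1},\dots,p_{j_{d-1}})$; call this common $(d-1)$-dimensional subspace $V$.

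To finish, assume $I\neq J$. As $I$ and $J$ are increasing $(d-1)$-tuples viewed as sets, $I\cup J$ has at least $d$ elements; choose any $d$ of them, say $K=\{k_1<\dots<k_d\}$. The columns $p_{k_1},\dots,p_{k_d}$ form a $d\times d$ submatrix of the TP-matrix $A$ with nonzero determinant, hence they are linearly independent in $\Re^d$; but every column indexed by $I\cup J$ lies in $V$, which has dimension $d-1$ — a contradiction. Therefore $\pi_I\neq\pi_J$.

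The argument involves essentially no computation; the crux is the structural reduction in the first two paragraphs — recognizing that equality of the two \emph{normalized} hyperplanes is equivalent to equality of the spans of the two $(d-1)$-subsets of columns. Once that is in place, the positivity of the $d\times d$ minors, which is precisely the statement that no $d$ columns of $A$ lie in a common $(d-1)$-dimensional subspace, closes the loop. The one point to watch is that it is the constant $1$ on the right-hand side (rather than merely the direction of the normal) that makes $c_I$ well-defined; with an arbitrary nonzero constant one would only recover the spans up to this normalization, which would still suffice here but should be stated carefully.
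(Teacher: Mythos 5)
Your proof is correct, but it takes a different route from the paper. The paper's argument is a witness-point argument: it picks an index $l\in I\setminus J$, notes that $p_l\notin\mathrm{Span}\{p_j: j\in J\}$ (any $d$ columns of a TP-matrix are independent), and exhibits the explicit point $\frac{1}{\det B}\,p_l$, which lies on $\pi_J$ (by rescaling to make the determinant $1$) but not on $\pi_I$ (the corresponding determinant vanishes because $p_l$ lies in the span of the $I$-columns). You instead expand the defining determinant along the last column to identify the normal vector $c_I$, check $c_I\neq 0$ via positivity of the $(d-1)\times(d-1)$ minors, show the direction space of $\pi_I$ is exactly $\mathrm{Span}\{p_i: i\in I\}$, and then derive a contradiction from the independence of any $d$ columns applied to $I\cup J$. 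Both proofs ultimately rest on the same TP fact (nonvanishing $d\times d$ minors); yours additionally makes explicit that the defining equation is nondegenerate, i.e.\ that $\pi_I$ really is a hyperplane, which the paper takes for granted. Your argument also yields a slightly stronger conclusion: for $I\neq J$ the hyperplanes are not even parallel, since their direction spaces differ — which is essentially the observation the paper invokes later (in the distinct-minors theorem, for $\pi_{t_1,I}\neq\pi_{t_2,J}$) with "almost an identical proof." The paper's version is shorter and more constructive; yours is more structural and packages the non-parallelism for free. Your closing caveat about the normalization constant is correctly handled — equality of the affine hyperplanes already forces equality of direction spaces, which is all you use.
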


\begin{proof}
Since $I \neq J$, there exists an index $l \in \{1,2,\ldots,n\}$ such that $l \in I$ and $l \notin J$. Obviously, $p_l \in Span\{p_i | i \in I \}$. Consider the set $\{p_l\} \cup \{p_j | j \in J \}$. This set is consisted of $d$ column vectors of $A$, and since $A$ is totally positive, any set of $d$ columns of $A$ must be linearly independent. Therefore $p_l \notin Span\{p_j | j \in J \}$, and hence the ${d \times d}$ matrix $B = \left(
 \begin{array}{ccccc}
p_{j_1} & p_{j_2} & \ldots & p_{j_{d-1}} & p_l \\
\end{array}
\right)$ (where $J= (j_1 , j_2 , \cdots , j_{d-1})$) has a nonzero determinant. Consider the matrix $B' = \left(
\begin{array}{ccccc}
p_{j_1} & p_{j_2} & \ldots & p_{j_{d-1}} & \frac{1}{\det B} p_l \\
\end{array}
\right)$.
Note that $\det B' = 1$, and therefore, from the definition of $B'$, we can conclude that $\frac{1}{\det B} p_l$ is incident with the plane $\pi_{J}$. On the other hand, since $p_l \in Span\{p_i | i \in I \}$, then $\frac{1}{\det B} p_l$ is not incident with the plane $\pi_{I}$, since the corresponding determinant is equal to zero. Thus, the point $\frac{1}{det B} p_l$ is incident with $\pi_{J}$ but not with $\pi_{I}$. Therefore, $\pi_{I}$ and $\pi_{J}$ are indeed distinct, as asserted.
\end{proof}

Next, using Lemma~\ref{hyperplanes-lemma}, we show the connection between Theorem~\ref{hyperplanes-inc} and bounding the number of times a given $d \times d$ minor can repeat in a $d \times n$ TP-matrix $A_{d \times n}$.
\begin{theorem}\label{upperbound}
Let $A = A_{d \times n}$ be a TP-matrix. Let $f(A)$ be the number of $d \times d$ sub-matrices of $A$ having determinant $1$. Then $f(A)= O(n^{d-\frac{d}{d+1}})$.
\end{theorem}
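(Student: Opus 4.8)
The plan is to reduce $f(A)$ to a point--hyperplane incidence count and then invoke Theorem~\ref{hyperplanes-inc}. First I would note that a $d\times d$ submatrix of $A$ is nothing but a choice of $d$ column indices $k_1<k_2<\cdots<k_d$, and by the remark preceding Lemma~\ref{hyperplanes-lemma} such a submatrix has determinant $1$ exactly when the point $p_{k_d}$ lies on the hyperplane $\pi_I$ associated to the $(d-1)$-tuple $I=(k_1,\dots,k_{d-1})$. The assignment sending each determinant-$1$ submatrix to the pair $(\pi_I,p_{k_d})$ is injective: distinct $(d-1)$-tuples give distinct hyperplanes by Lemma~\ref{hyperplanes-lemma}, and distinct columns of a $TP$ matrix are distinct points. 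Hence $f(A)\le I(P,\Pi)$, where $P=\{p_1,\dots,p_n\}$ and $\Pi=\{\pi_I\}$ ranges over all $(d-1)$-tuples, so that $|P|=n$ and, again by Lemma~\ref{hyperplanes-lemma}, $|\Pi|=\binom{n}{d-1}=\Theta(n^{d-1})$. (One should also remark that each $\pi_I$ is genuinely an affine hyperplane: expanding its defining determinant along the last column, the coefficient vector consists of $(d-1)\times(d-1)$ minors of $A$, all nonzero by total positivity.)

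Next I would check the hypothesis needed to apply Theorem~\ref{hyperplanes-inc}, i.e.\ that the incidence graph of $P$ and $\Pi$ contains no fixed $K_{r,s}$. I claim it is $K_{d,2}$-free. Indeed, if $d$ points of $P$ were each incident to two distinct hyperplanes $\pi_I\neq\pi_J$, these $d$ points would all lie in $\pi_I\cap\pi_J$; but the intersection of two distinct affine hyperplanes in $\Re^d$ is either empty or an affine flat of dimension $d-2$, so it cannot contain $d$ affinely independent points. On the other hand, any $d$ columns of a $TP$ matrix are linearly independent, and $d$ linearly independent vectors in $\Re^d$ are automatically affinely independent, hence span a $(d-1)$-dimensional flat --- a contradiction. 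So the incidence graph omits $K_{d,2}$.

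Finally I would plug $r=d$, $s=2$, $m=n$, $N=|\Pi|=\Theta(n^{d-1})$ into Theorem~\ref{hyperplanes-inc}:
$$
f(A)\;\le\;I(P,\Pi)\;=\;O\bigl((mN)^{1-\frac{1}{d+1}}+m+N\bigr)\;=\;O\bigl(n^{\frac{d^2}{d+1}}+n+n^{d-1}\bigr).
$$
Since $d-\tfrac{d}{d+1}=\tfrac{d^2}{d+1}$ and $\tfrac{d^2}{d+1}-(d-1)=\tfrac{1}{d+1}>0$, while also $\tfrac{d^2}{d+1}\ge 1$ for $d\ge 2$, the first term dominates and $f(A)=O(n^{d-\frac{d}{d+1}})$; the case $d=1$ is trivial.

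The computation itself is short, so the real content --- and the step I would be most careful about --- is the $K_{d,2}$-freeness. It is precisely what upgrades the trivial estimate $I(P,\Pi)=O(mN)$ to the nontrivial $O\bigl((mN)^{1-1/(d+1)}\bigr)$, and it is the only place where total positivity of $A$ (through linear independence of every $d$-tuple of columns) enters in an essential way. I would double-check that $d$ linearly independent vectors in $\Re^d$ are affinely independent, and that the dimension bound on the intersection of two distinct hyperplanes (including the parallel case, where the intersection is empty) is handled cleanly.
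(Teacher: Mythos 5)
Your proposal is correct and follows essentially the same route as the paper: reduce $f(A)$ to point--hyperplane incidences, use Lemma~\ref{hyperplanes-lemma} to get $\Theta(n^{d-1})$ distinct hyperplanes, rule out $K_{d,2}$ via the dimension of the intersection of two distinct hyperplanes together with the linear independence of any $d$ columns guaranteed by total positivity, and then apply Theorem~\ref{hyperplanes-inc}. Your phrasing of the $K_{d,2}$ step through affine independence (rather than the paper's ``hyperplane through the origin'' formulation) is just a cosmetic variant of the same argument.
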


\begin{proof}
First, note that the number $f(A)$ is upper bounded by the number of incidences between the $n$ points $P=\{p_i\}_{i=1}^n$
and the set of hyperplanes $\Pi=\{\pi_{I} : I\subset [n] \cardin{I}=d-1\}$.
Hence, our goal is to bound the number of such incidences. From Lemma~\ref{hyperplanes-lemma}, $\Pi$ has ${n \choose d-1}= O(n^{d-1})$ distinct hyperplanes. We will show now that the intersection of any two hyperplanes in $\Pi$ can contain at most $d-1$ points. Let us take two ordered tuples $I$, $J$ such that $I \neq J$. Then by Lemma~\ref{hyperplanes-lemma} the hyperplanes $\pi_{I}$ and $\pi_{J}$ are distinct, and hence their intersection is an affine space of dimension at most $d-2$. Thus, if $P$ has $d$ points that are lying in the intersection of $\pi_{I}$ and $\pi_{J}$, there exists a hyperplane that passes through the origin and contains those $d$ points, a contradiction to the assumption that $A$ is totally positive. Hence, the incidence graph of $P$ and $\Pi$ does not contain a $K_{d,2}$ subgraph. We can now apply Theorem~\ref{hyperplanes-inc} with $n$ points and ${n \choose d-1}$-hyperplanes and obtain the asserted bound.
\end{proof}
For the special case of $d=3$, the upper bound obtained in Theorem~\ref{upperbound} is $O(n^\frac{9}{4})$. This upper bound is not tight, and in order to present a better upper bound, we use the following result of Apfelbaum and Sharir \cite{AS07}:
\begin{theorem}\label{hyperplanes_threedim}
Let $P\subset \Re^3$ be a set of $m$ points and let $\Pi$ be a set of $n$ planes in $\Re^3$, such that no three points of $P$ are collinear. Then the number of incidences is bounded by $I(P, \Pi) = O(m^\frac{3}{5}n^\frac{4}{5} + m + n)$.
\end{theorem}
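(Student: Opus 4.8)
The plan is to trade the three-dimensional incidence structure for a planar one and feed it into the Szemer\'edi--Trotter theorem (Theorem~\ref{lines-inc}), using the hypothesis that no three points of $P$ are collinear twice: once to make the reduction to the plane lossless, and once --- through the elementary fact that three non-collinear points lie on a \emph{unique} plane --- to control how many points a single plane can contain. First record the combinatorial consequence of the hypothesis: any two distinct planes of $\Pi$ meet in at most a line, and such a line carries at most two points of $P$, so the point-plane incidence graph contains no $K_{3,2}$; by Theorem~\ref{hyperplanes-inc} this already gives $I(P,\Pi)=O\big((mn)^{3/4}+m+n\big)$, and the real task is to sharpen the exponents to $\frac{3}{5}$ and $\frac{4}{5}$.

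For the reduction, fix a point $p\in P$ and pass to the projective plane $\mathbb{P}^{2}$ whose points are the lines through $p$. Each plane of $\Pi$ containing $p$ becomes a line of $\mathbb{P}^{2}$, and two distinct such planes become two lines meeting in exactly one point, so these are genuine lines; each point $q\in P\setminus\{p\}$ becomes the point of $\mathbb{P}^{2}$ determined by the line $\overline{pq}$, and since no three points of $P$ are collinear, distinct $q$'s yield distinct points. Writing $t_p$ for the number of planes of $\Pi$ through $p$ and $k_\pi:=|P\cap\pi|$, we obtain $m-1$ distinct points and $t_p$ distinct lines in $\mathbb{P}^{2}$, with the relation ``$q\in\pi$'' (for $p\in\pi$) corresponding to a point-line incidence. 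Sending a generic line to infinity and invoking Theorem~\ref{lines-inc} gives $\sum_{\pi\ni p}(k_\pi-1)=O\big(m^{2/3}t_p^{2/3}+m+t_p\big)$. Summing over $p\in P$ and using $\sum_{p}\sum_{\pi\ni p}(k_\pi-1)=\sum_{\pi}k_\pi(k_\pi-1)$ yields $\sum_{\pi}k_\pi^{2}=O\big(m^{2/3}\sum_{p}t_p^{2/3}+m^{2}+I(P,\Pi)\big)$, after which Cauchy--Schwarz applied to $I(P,\Pi)=\sum_\pi k_\pi$ (together with $t_p\le n$) recovers a bound of the form $O\big((mn)^{3/4}+\cdots\big)$. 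To descend to $\frac{3}{5},\frac{4}{5}$ I would then feed in the two ``richness'' estimates forced by the hypothesis --- at most $O(n^{2}/k^{2})$ points lie on $\ge k$ planes, coming from $K_{3,2}$-freeness, and at most $O(m^{3}/k^{3})$ planes contain $\ge k$ points, coming from uniqueness of the plane through three non-collinear points --- decompose points and planes dyadically according to richness, dispose of the few ``heavy'' objects directly, and optimize the thresholds against the Szemer\'edi--Trotter estimate above.

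The step I expect to be the main obstacle is that these estimates are genuinely self-referential: the planar Szemer\'edi--Trotter bound controls $\sum_\pi k_\pi^{2}$ only through $\sum_p t_p^{2/3}$, which in turn is controlled only through quantities already involving $I(P,\Pi)$ and the richness counts, so closing the loop at exactly $\frac{3}{5},\frac{4}{5}$ --- rather than at the weaker $\frac{3}{4},\frac{3}{4}$ --- requires a careful bootstrapping and a precisely tuned heavy/light split, along with absorbing the degenerate configurations (planes of $\Pi$ through the chosen center $p$, and the $O(1)$ planes parallel to a given plane), which contribute only lower-order terms. This delicate accounting is precisely what Apfelbaum and Sharir carry out in \cite{AS07}, whose bound $I(P,\Pi)=O\big(m^{3/5}n^{4/5}+m+n\big)$ we accordingly cite.
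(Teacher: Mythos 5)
The paper does not prove this statement at all --- it is quoted verbatim as a result of Apfelbaum and Sharir \cite{AS07} --- and your proposal, after a plausible but admittedly incomplete sketch of the projection/richness strategy, likewise ends by citing \cite{AS07} for the actual bound $O(m^{3/5}n^{4/5}+m+n)$. So you take essentially the same route as the paper (reliance on the cited result), and nothing in your preliminary reductions conflicts with it.
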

Now we are ready to prove the following theorem:
\begin{theorem}
Let $A = A_{3 \times n}$ be a TP-matrix. Then $f(A)= O(n^\frac{11}{5})$.
\end{theorem}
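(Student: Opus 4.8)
The plan is to repeat the proof of Theorem~\ref{upperbound} essentially verbatim up to the point where an incidence bound is invoked, and then to feed the sharper three-dimensional estimate of Theorem~\ref{hyperplanes_threedim} into the machine in place of the general bound of Theorem~\ref{hyperplanes-inc}. As before, after scaling we may assume the repeated minor equals $1$; let $P=\{p_i\}_{i=1}^n\subset\Re^3$ be the column vectors of $A$, and let $\Pi=\{\pi_I : I\subset [n],\ \card{I}=2\}$ be the associated planes. By Lemma~\ref{hyperplanes-lemma} these ${n \choose 2}=O(n^2)$ planes are pairwise distinct, and, exactly as in Theorem~\ref{upperbound}, $f(A)$ is at most the number of incidences $I(P,\Pi)$, since a $3\times 3$ submatrix on columns $i<j<k$ with determinant $1$ records the incidence of $p_k$ with $\pi_{(i,j)}$.

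To apply Theorem~\ref{hyperplanes_threedim} the only additional hypothesis to verify is that no three points of $P$ are collinear, and this is exactly where total positivity enters. Suppose $p_i,p_j,p_k$ were collinear, say all lying on a line $\{a+tb : t\in\Re\}$, and write $p_i=a+t_ib$, $p_j=a+t_jb$, $p_k=a+t_kb$. In the matrix $M=\bigl(p_i\ \ p_j\ \ p_k\bigr)$, subtracting the first column from the other two turns the last two columns into scalar multiples of $b$, so $\det M=0$, contradicting the positivity of the $3\times 3$ minor $\Delta_{\{1,2,3\},\{i,j,k\}}$ of $A$. (If the common line happens to pass through the origin, this case is anyway excluded, since any two columns of a TP-matrix are linearly independent.)

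With non-collinearity established, Theorem~\ref{hyperplanes_threedim} applied to the $m=n$ points of $P$ and the $\card{\Pi}=O(n^2)$ planes of $\Pi$ yields
$$
f(A)\ \le\ I(P,\Pi)\ =\ O\!\left(n^{3/5}\,(n^2)^{4/5}+n+n^2\right)\ =\ O\!\left(n^{11/5}+n^2\right)\ =\ O\!\left(n^{11/5}\right),
$$
since $11/5>2$ makes the $n^2$ term negligible; that is the asserted bound.

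The argument is routine given the apparatus already in place; the one genuinely new ingredient is the collinearity check above, and the only real subtlety there is to phrase it so that it simultaneously covers the case in which the common line passes through the origin and the case in which it does not. I expect that step — rather than the incidence bookkeeping — to be the place where care is needed.
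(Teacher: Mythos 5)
Your proposal is correct and follows essentially the same route as the paper: reuse the setup of Theorem~\ref{upperbound} with the $O(n^2)$ planes $\pi_I$, note that total positivity forbids three collinear columns, and plug into Theorem~\ref{hyperplanes_threedim}. Your explicit column-subtraction verification of non-collinearity is a correct spelling-out of a step the paper merely asserts.
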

\begin{proof}
Following the proof of Theorem~\ref{upperbound}, we have $|\Pi|=O(n^{2})$, and since $A$ is totally positive, no three points of $P$ are collinear. Hence, by Theorem~\ref{hyperplanes_threedim}, $I(P, \Pi) = O(n^\frac{3}{5}(n^2)^\frac{4}{5} + n + n^2)=O(n^\frac{11}{5})$.
\end{proof}
A natural question is whether this bound is tight. Since it is not known whether the bound from Theorem~\ref{hyperplanes_threedim} is tight, our question is also open.

\subsection{Distinct Minors}
Here we investigate the number of distinct $d\times d$ minors in a $d \times n$ TP- matrix.
 We show the following:
\begin{theorem}
Let $d << n$. The number of distinct $d\times d$ minors in a $d \times n$ TP- matrix is $\Omega(n)$
\end{theorem}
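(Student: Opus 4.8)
The plan is to show that a $d \times n$ TP-matrix must have at least a linear number of distinct $d \times d$ minors by exhibiting $\Omega(n)$ pairwise-distinct minor values, all sharing a common $(d-1)$-subset of columns. Fix the columns $p_1, \dots, p_{d-1}$ and, for each $k$ with $d \le k \le n$, consider the $d \times d$ minor $\Delta_k := \det(p_1, p_2, \dots, p_{d-1}, p_k)$. This gives $n-d+1 = \Omega(n)$ candidate values (using $d \ll n$), and since $A$ is TP every $\Delta_k$ is positive. It therefore suffices to prove that the map $k \mapsto \Delta_k$ is injective on $\{d, d+1, \dots, n\}$, or at least that it is injective on a positive fraction of these indices.

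The key step is to leverage the sign-variation / monotonicity structure forced by total positivity. First I would observe that $\Delta_k = \langle v, p_k \rangle$ where $v$ is the fixed vector of signed $(d-1)\times(d-1)$ minors of the submatrix on rows $\{1,\dots,\widehat{i},\dots,d\}$ and columns $\{1,\dots,d-1\}$ (i.e.\ $v$ is the vector realizing the Laplace expansion of $\Delta_k$ along the last column); note $v$ does not depend on $k$. So the question reduces to: can the linear functional $x \mapsto \langle v, x\rangle$ take the same value on two of the columns $p_k, p_\ell$ with $k \ne \ell$? If $\langle v, p_k\rangle = \langle v, p_\ell\rangle$ then $\langle v, p_k - p_\ell\rangle = 0$, i.e.\ the hyperplane $\{x : \langle v,x\rangle = \Delta_k\}$ passes through both $p_k$ and $p_\ell$. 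This is precisely the hyperplane $\pi_I$ (suitably normalized) from the previous subsection with $I = (1,\dots,d-1)$, so I would reuse exactly the argument in the proof of Theorem~\ref{upperbound}: a hyperplane containing $d$ of the columns of $A$ would, together with the origin, force $d$ columns to be linearly dependent, contradicting total positivity — but here I only have two columns on the hyperplane, which is not yet a contradiction. So the bare incidence bound is not enough, and I need the stronger ordering property.

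The fix, and the place I expect the real work to be, is to use the fact that the columns of a TP-matrix, when one tracks a fixed set of $d-1$ of them, interact with the remaining column in a strictly monotone fashion. Concretely: for a $d\times n$ TP-matrix the ``compound'' curve $k \mapsto \big(\Delta_{I \cup \{k\}}\big)$ ranging over all $(d-1)$-subsets $I$ of $\{1,\dots,d-1,\text{shifts}\}$ behaves like a convex/order curve, so that for the \emph{fixed} subset $I = \{1,\dots,d-1\}$ the values $\Delta_d < \Delta_{d+1} < \cdots < \Delta_n$ are in fact strictly increasing — this follows because any three consecutive columns, together with $p_1,\dots,p_{d-1}$, give a $d\times(d+2)$ TP-submatrix, and a short sign argument (a $3$-term Plücker-type relation among $\Delta_{k-1}, \Delta_k, \Delta_{k+1}$ using that all $(d)\times(d)$ minors involving $p_1,\dots,p_{d-1}$ are positive and all $(d+1)$-column determinant-like quantities vanish appropriately) rules out equality $\Delta_k = \Delta_{k+1}$, and transitivity of the induced order rules out $\Delta_k = \Delta_\ell$ for any $k < \ell$. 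Once strict monotonicity of $k \mapsto \Delta_k$ on $\{d,\dots,n\}$ is established, all $n-d+1$ values are distinct, giving $\Omega(n)$ distinct $d\times d$ minors and completing the proof. The main obstacle is verifying this monotonicity cleanly; if a direct Plücker relation turns out to be awkward, a fallback is to pass to a $3\times m$ or $2\times m$ minor obtained by choosing rows carefully and invoking the well-known fact that in a TP$_2$ matrix such "projected" sequences are strictly monotone, which already yields $\Delta_k \ne \Delta_\ell$ for at least a constant fraction of pairs and hence still $\Omega(n)$ distinct values.
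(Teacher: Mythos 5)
Your reduction of $\Delta_k=\det(p_1,\dots,p_{d-1},p_k)$ to a fixed linear functional $\langle v,p_k\rangle$ is fine, but the claim the whole proof rests on --- that for a fixed leading $(d-1)$-tuple the values $\Delta_d,\Delta_{d+1},\dots,\Delta_n$ are strictly increasing, or at least injective on a positive fraction of indices --- is false, and no Pl\"ucker-type sign argument can rescue it: total positivity constrains only the \emph{signs} of minors, not their magnitudes, and replacing a column $p_k$ by $\lambda p_k$ with $\lambda>0$ preserves total positivity while scaling $\Delta_k$ arbitrarily. Concretely, for $d=2$ take $p_1=(9,1)$ and let $p_2,\dots,p_n$ be any points of the line $\{(x,y):9y-x=1\}$ in the open first quadrant, listed in order of increasing slope $y/x$. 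All entries are positive, all $2\times2$ minors are positive (for $2\le k<\ell$ one computes $x_ky_\ell-x_\ell y_k=(x_k-x_\ell)/9>0$, and $\det(p_1,p_k)=9y_k-x_k=1$), so the matrix is TP, yet $\Delta_k=1$ for \emph{every} $k\ge2$: your family of minors produces exactly one distinct value. The paper's own lower-bound construction for Theorem~\ref{two_by_n} exhibits the same phenomenon with about $n^{1/3}$ repetitions per fixed first column, so the weakened ``positive fraction'' version fails as well, and the fallback of projecting to a $TP_2$ matrix fails for the same reason (sequences of minors of a $TP_2$ matrix with one column held fixed need not be monotone). Note this does not contradict the theorem: in the example above the minors \emph{not} containing $p_1$ still take many values, but your argument never looks at them.

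The structural problem is that restricting attention to minors containing one fixed $(d-1)$-set of columns discards almost all of the information, since all of those minors can coincide. The paper instead counts all $\binom{n}{d}$ minors simultaneously: for every attained value $t\in T$ and every $(d-1)$-tuple $I$ it forms the hyperplane $\pi_{t,I}$, checks (via Lemma~\ref{hyperplanes-lemma} and its easy variants) that these $|T|\binom{n}{d-1}$ hyperplanes are pairwise distinct, observes as in Theorem~\ref{upperbound} that the incidence graph of the $n$ column points with these hyperplanes contains no $K_{d,2}$ (two distinct hyperplanes meeting in $d$ of the points would force $d$ linearly dependent columns), and then plays the Apfelbaum--Sharir bound $I(P,\Pi_T)=O\big((n\cdot n^{d-1}|T|)^{1-\frac{1}{d+1}}\big)$ against the trivial count $I(P,\Pi_T)\ge\binom{n}{d}\ge(n/d)^d$, since every minor contributes an incidence; solving for $|T|$ gives $|T|=\Omega(n)$ when $d\ll n$. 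If you want to salvage your more elementary viewpoint, you would need some way of aggregating over all $(d-1)$-tuples $I$ at once, which is exactly what this incidence-counting argument accomplishes.
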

\begin{proof}
Let $A$ be a $d \times n$ TP- matrix, and let $T=\big\{\det(B)\ |\ B \textrm{ is a } d\times d \textrm{ submatrix of } A\big\}$. Let $I$ be a $d-1$-tuple of indices $I= (i_1 , i_2 , \cdots , i_{d-1}),  1\leq i_1 < i_2 < \cdots < i_{d-1}  \leq n$, and let $t \in T$. Denote by $\pi_{t,I}$ the hyperplane that is defined by the following linear equation:
$$
\begin{vmatrix}
  A_{1i_1} & A_{1i_2} & . & . & . & A_{1i_{d-1}}& x_1 \\
 A_{2i_1} & A_{2i_2} & . & . & . & A_{2i_{d-1}}& x_2 \\
 . & . & . & . & . & .&.\\
  . & .& . & . & . & . & .\\
  . & . & . & . & . & . & . \\
 A_{di_1} & A_{di_2} & . & . & . & A_{di_{d-1}} & x_d \\
\end{vmatrix}=t.$$

Note that for $t_1 \neq t_2 $, $ \pi_{t_1,I}$ and $\pi_{t_2,I} $ are different parallel hyperplanes. From Lemma~\ref{hyperplanes-lemma}, $ \pi_{t,I} \neq \pi_{t,J} $ for $I \neq J$. Finally, using almost an identical proof, we get that for $t_1 \neq t_2 $, $I \neq J$ we have $ \pi_{t_1,I} \neq \pi_{t_2,J} $. Thus, the set $\Pi_{T}=\big\{\pi_{t,I}\ |\ t \in T, I \textrm{ is a } (d-1)\textrm{-tuple of indices }\big\}$ is of size $|T|{n \choose d-1}=O(|T|n^{d-1})$. From the definition of $\Pi_{T}$, $I(P, \Pi_{T})$ is bounded from below by the number of $d\times d$ minors of $A$, which is ${n \choose d} \geq (\frac{n}{d})^d$ (since every minor defines an incidence). On the other hand, similarly to the proof of Theorem~\ref{upperbound}, one can obtain that $I(P, \Pi_{T})=O((n(n^{d-1}|T|))^{1-\frac{1}{d+1}})$. Therefore, $(\frac{n}{d})^d\leq M(n^{d}|T|)^{1-\frac{1}{d+1}}$ for some positive constant $M$, and hence for $d << n$, $ |T|=\Omega(n)$.
\end{proof}
% We also show that this bound is asymptotically tight. \sray{How?}

\section{Construction of $TP_k$ matrices and unit-area rectangles in non-uniform grids}
\label{construction}
Consider an $n\times n$ $TP_2$ matrix $A$.
In \cite{TPequal}, it was proven that the number of equal entries in $A$ is $O(n^{4/3})$ and that this bound is asymptotically tight in the worst case. We are interested in bounding the number of equal $2 \times 2$ minors in $A$. Unfortunately, so far, we can only obtain the following rather weak upper bound:

\begin{theorem}
Let $A$ be an $n\times n$ $TP_2$ matrix. Then the number of equal $2\times 2$ minors in $A$ is $O(n^{\frac{10}{3}})$.
\end{theorem}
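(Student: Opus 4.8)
The plan is to reduce to the $2 \times n$ case already settled in Theorem~\ref{two_by_n} by slicing $A$ into its pairs of rows. Fix the value whose number of occurrences among the $2\times 2$ minors of $A$ we wish to bound. Replacing $A$ by $cA$ for a suitable $c>0$ multiplies every $2\times 2$ minor by $c^2$ and preserves the $TP_2$ property, so I may assume that the prescribed value is $1$. Write $N$ for the number of $2\times 2$ minors of $A$ equal to $1$.

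Next I would observe that for every pair of rows $1 \le i < i' \le n$, the $2 \times n$ submatrix $A^{(i,i')}$ formed by rows $i$ and $i'$ (and all $n$ columns) is totally positive: its $1\times 1$ minors are entries of $A$ and its $2\times 2$ minors are $2\times 2$ minors of $A$, all positive because $A$ is $TP_2$, and for a matrix with only two rows $TP_2$ coincides with $TP$. Every $2\times 2$ minor of $A$ that uses rows $i$ and $i'$ is a $2\times 2$ minor of $A^{(i,i')}$, and conversely, so by the upper bound of Theorem~\ref{two_by_n} applied to $A^{(i,i')}$ the number of $2\times 2$ minors of $A$ lying in rows $\{i,i'\}$ and equal to $1$ is $O(n^{4/3})$.

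Finally I would sum this estimate over the $\binom{n}{2} = O(n^2)$ choices of the unordered row pair $\{i,i'\}$, since each $2\times 2$ minor of $A$ occupies exactly one such pair of rows. This yields $N = O(n^2)\cdot O(n^{4/3}) = O(n^{10/3})$, as asserted.

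There is no real obstacle in this argument: it is just a union bound over the row pairs, applying the $2\times n$ result independently to each slice. This is also precisely why the exponent $10/3$ should be far from tight — the bound throws away the fact that the $TP_2$ condition couples minors lying in different pairs of rows — and improving it is left as an open problem.
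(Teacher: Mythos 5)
Your proof is correct and follows essentially the same route as the paper: slice $A$ into its $\binom{n}{2}=O(n^2)$ row pairs, apply the $O(n^{4/3})$ bound of Theorem~\ref{two_by_n} to each resulting $2\times n$ TP matrix, and sum. The extra normalization to value $1$ and the remark that $TP_2$ equals $TP$ for two-row matrices are harmless elaborations of the same argument.
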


\begin{proof}
The proof is is an easy corollary of Theorem~\ref{two_by_n}. Fix two row indices $I=\{i,j\}$ and consider the sub-matrix $A_{I,[n]}$. Theorem~\ref{two_by_n} implies that this matrix contains at most $O(n^{4/3})$ equal $2\times 2$ minors.
Since there are ${n \choose 2} = O(n^2)$ pairs of row indices, we immediately get the upper bound $O(n^2\cdot n^{4/3}) = O(n^{\frac{10}{3}})$, as asserted.
\end{proof}

Let us introduce an interesting class of $n\times n$ matrices with positive entries and positive $k\times k$ minors. For $k=2$, those matrices are $TP_2$, and we investigate the number of equal $2\times 2$ minors in such matrices.

\begin{theorem}\label{kbykconstruction}
Let $ a_k > a_{k-1}> \cdots > a_1 > 0 $, $ 0< b_k <b_{k-1}< \cdots <b_1$ be two sets of $k$ reals, $k \geq 2$. Let $A$ be a $k\times k$ matrix in which $A_{ij}=(b_i + a_j)^{k-1}$. Then

$$
\det(A)=\prod_{i=0}^{k-1}\binom{k-1}{i}\prod_{1\leq t < l \leq k} (a_l-a_t) \cdot \prod_{1\leq w < u \leq k}(b_w-b_u).
$$
\end{theorem}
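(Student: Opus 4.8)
The plan is to factor $A$ through the binomial theorem into a product of three structured matrices whose determinants are immediate, and then keep careful track of signs. Expanding each entry gives
$$A_{ij}=(b_i+a_j)^{k-1}=\sum_{m=0}^{k-1}\binom{k-1}{m}b_i^{\,m}a_j^{\,k-1-m},$$
which is exactly the $(i,j)$ entry of the matrix product $B\,D\,C$, where $B$ is the $k\times k$ matrix with $B_{i,m}=b_i^{\,m}$ (rows $1\le i\le k$, columns $0\le m\le k-1$), $D$ is the diagonal matrix with diagonal entries $\binom{k-1}{0},\binom{k-1}{1},\dots,\binom{k-1}{k-1}$, and $C$ is the $k\times k$ matrix with $C_{m,j}=a_j^{\,k-1-m}$ (rows $0\le m\le k-1$, columns $1\le j\le k$). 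Consequently $\det A=\det B\cdot\det D\cdot\det C$, and it remains to evaluate the three factors. Note that this is a polynomial identity in the $a_i$ and $b_i$, so the ordering hypotheses play no role here; they matter only for positivity of the minors elsewhere.

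Next I would compute each determinant. The matrix $B$ is a Vandermonde matrix in $b_1,\dots,b_k$ (its columns are indexed by the exponents $0,1,\dots,k-1$), so $\det B=\prod_{1\le w<u\le k}(b_u-b_w)$. The factor $\det D=\prod_{i=0}^{k-1}\binom{k-1}{i}$ is immediate. For $C$, I would observe that reversing the order of its rows yields the matrix $\widetilde C$ with $\widetilde C_{m,j}=a_j^{\,m}$, which is the transpose of a Vandermonde matrix and hence has determinant $\prod_{1\le t<l\le k}(a_l-a_t)$; since the reversal of $k$ rows is a permutation with $\binom{k}{2}$ inversions, we get $\det C=(-1)^{\binom{k}{2}}\prod_{1\le t<l\le k}(a_l-a_t)$.

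Finally I would assemble the product. Rewriting $\det B=\prod_{1\le w<u\le k}(b_u-b_w)=(-1)^{\binom{k}{2}}\prod_{1\le w<u\le k}(b_w-b_u)$ and multiplying the three determinants, the two factors $(-1)^{\binom{k}{2}}$ cancel, yielding
$$\det A=\prod_{i=0}^{k-1}\binom{k-1}{i}\prod_{1\le t<l\le k}(a_l-a_t)\prod_{1\le w<u\le k}(b_w-b_u),$$
as claimed. The only real subtlety is the sign bookkeeping — tracking the $(-1)^{\binom{k}{2}}$ coming from the row reversal in $C$ against the $(-1)^{\binom{k}{2}}$ coming from writing the $b$-Vandermonde with its factors in the order $(b_w-b_u)$ — and verifying that the two contributions cancel; everything else is a direct application of the Vandermonde determinant formula. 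An alternative route via iterated column operations and induction on $k$ also works, but the $B\,D\,C$ factorization is cleaner and makes the product of binomial coefficients transparent.
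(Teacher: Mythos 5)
Your proof is correct, but it follows a genuinely different route from the paper's. The paper treats $\det(A)$ as a polynomial of degree $k(k-1)$ in $\mathbb{Z}[a_1,\dots,a_k,b_1,\dots,b_k]$: subtracting column $t$ from column $l$ shows that $a_l-a_t$ divides $\det(A)$, the analogous row operation gives divisibility by each $b_w-b_u$, unique factorization then yields that the full product $\prod_{t<l}(a_l-a_t)\prod_{w<u}(b_w-b_u)$ divides $\det(A)$, a degree count shows the quotient is a constant $C$, and $C=\prod_{i=0}^{k-1}\binom{k-1}{i}$ is extracted by comparing the coefficient of the monomial $a_k^{k-1}a_{k-1}^{k-2}\cdots a_2\, b_1^{k-1}b_2^{k-2}\cdots b_{k-1}$, which on the determinant side arises only from the main diagonal. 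You instead use the binomial theorem to exhibit the factorization $A=BDC$ with $B$ and (after row reversal) $C$ Vandermonde matrices and $D$ diagonal with entries $\binom{k-1}{m}$, and you multiply the three determinants; your sign bookkeeping is right — the $(-1)^{\binom{k}{2}}$ from the row reversal in $C$ cancels against the $(-1)^{\binom{k}{2}}$ from rewriting $\prod_{w<u}(b_u-b_w)$ as $\prod_{w<u}(b_w-b_u)$ — and your observation that the identity is purely polynomial, so the ordering hypotheses are irrelevant to it, is also accurate. Your factorization makes the constant $\prod_i\binom{k-1}{i}$ appear for free as $\det D$ and avoids the UFD and degree arguments entirely, at the cost of having to see the $BDC$ decomposition and track signs; the paper's divisibility-plus-degree argument is the more generic technique (it works for determinants of this shape even without an explicit factorization) but needs the separate coefficient comparison to pin down the constant.
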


\begin{proof}
$\det(A)$ is a polynomial of degree $k(k-1)$ in $Z[a_1,a_2, \ldots, a_k,b_1,b_2, \ldots, b_k]$. Let $A^{\{l,t\}}$ be the matrix that is obtained from $A$ by subtracting the $t^{th}$ column from the $l^{th}$ column. Note that $\det(A)=\det(A^{\{l,t\}})$. In addition, for $1 \leq m \leq k$, ${A^{\{l,t\}}}_{ml}=(b_m + a_l)^{k-1}-(b_m + a_t)^{k-1}$. Using the fact that for a natural number $n$, the polynomial $x^n - y^n$ is divisible (over $Z[x,y]$) by $x-y$, we get that $(b_m + a_l)^{k-1}-(b_m + a_t)^{k-1}$ is divisible by $(b_m + a_l)-(b_m + a_t) = a_l-a_t$. Thus, each entry in the $l^{th}$ column of $A^{\{l,t\}}$ is divisible by $a_l-a_t$, and therefore $a_l-a_t | \det(A^{\{l,t\}})$. By repeating the process for each pair of columns in $A$, and using the fact that $Z[a_1,a_2, \ldots, a_k,b_1,b_2, \ldots, b_k]$ is a unique factorization domain, we get $\prod_{1\leq t < l \leq k} (a_l-a_t)|\det(A)$. Applying the process on pairs of rows instead pairs of columns leads to $\prod_{1\leq w < u \leq k}(b_w-b_u)|\det(A)$, and therefore
\begin{center}
 $\prod_{1\leq t < l \leq k} (a_l-a_t) \cdot \prod_{1\leq w < u \leq k}(b_w-b_u) | \det(A)$.
\end{center}
On the other hand, the degree of
\begin{center}
$\prod_{1\leq t < l \leq k} (a_l-a_t) \cdot \prod_{1\leq w < u \leq k}(b_w-b_u)$
\end{center}
 is $\binom{k}{2}+\binom{k}{2}=k(k-1)$, and therefore there exists a constant $C$ such that
\begin{center}
$\det(A)=C\prod_{1\leq t < l \leq k} (a_l-a_t) \cdot \prod_{1\leq w < u \leq k}(b_w-b_u)$.
\end{center}
 In order to find $C$, consider the coefficient of the monomial
 \begin{center}
 ${a_k}^{k-1}{a_{k-1}}^{k-2}\ldots a_2{b_1}^{k-1}{b_2}^{k-2}\ldots b_{k-1}$.
\end{center}
In
\begin{center}
$C\prod_{1\leq t < l \leq k} (a_l-a_t) \cdot \prod_{1\leq w < u \leq k}(b_w-b_u)$,
\end{center}
the coefficient is $C$. On the other hand, while calculating $\det(A)$, among all the possible generalized diagonals in $A$, this monomial appears only when we multiply all the entries on the main diagonal, and its coefficient is $\prod_{i=0}^{k-1}\binom{k-1}{i}$. Therefore, $C=\prod_{i=0}^{k-1}\binom{k-1}{i}$
\end{proof}

As a corollary, we obtain the following:

\begin{corollary}
Let $ a_n > a_{n-1}> \cdots > a_1 > 0 $, $ 0< b_n <b_{n-1}< \cdots <b_1$ be two sets of $n$ reals, $n \geq 2$. Let $k>1$ be a natural number, and let $A$ be an $n\times n$ matrix in which $A_{ij}=(b_i + a_j)^{k-1}$. Then any $k\times k$ minor in $A$ is positive.
\end{corollary}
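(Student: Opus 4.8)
The plan is to derive the statement directly from Theorem~\ref{kbykconstruction}, using the fact that every $k\times k$ submatrix of $A$ is again a matrix of the type considered there.

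First I would fix an arbitrary $k\times k$ submatrix $B$ of $A$: it is determined by a row set $\{i_1<i_2<\cdots<i_k\}$ and a column set $\{j_1<j_2<\cdots<j_k\}$, and its entries are $B_{st}=(b_{i_s}+a_{j_t})^{k-1}$ for $1\le s,t\le k$. Setting $\widetilde a_t=a_{j_t}$ and $\widetilde b_s=b_{i_s}$, I would observe that a subsequence of a strictly increasing positive sequence is again strictly increasing and positive, and likewise for strictly decreasing ones; hence $\widetilde a_k>\widetilde a_{k-1}>\cdots>\widetilde a_1>0$ and $0<\widetilde b_k<\widetilde b_{k-1}<\cdots<\widetilde b_1$. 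Thus $B$ is exactly the $k\times k$ matrix of Theorem~\ref{kbykconstruction} associated with the reals $\widetilde a_1,\dots,\widetilde a_k$ and $\widetilde b_1,\dots,\widetilde b_k$, since $B_{st}=(\widetilde b_s+\widetilde a_t)^{k-1}$.

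Next I would invoke the determinant formula of Theorem~\ref{kbykconstruction}, which gives
$$
\det(B)=\prod_{i=0}^{k-1}\binom{k-1}{i}\cdot\prod_{1\le t<l\le k}(\widetilde a_l-\widetilde a_t)\cdot\prod_{1\le w<u\le k}(\widetilde b_w-\widetilde b_u).
$$
It then remains to note that each of the three factors on the right is positive: $\prod_{i=0}^{k-1}\binom{k-1}{i}$ is a positive integer; for $t<l$ one has $\widetilde a_l-\widetilde a_t>0$ because $(\widetilde a_t)$ is strictly increasing; and for $w<u$ one has $\widetilde b_w-\widetilde b_u>0$ because $(\widetilde b_s)$ is strictly decreasing. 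Hence $\det(B)>0$, and since $B$ was an arbitrary $k\times k$ submatrix, every $k\times k$ minor of $A$ is positive.

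I do not expect any genuine obstacle: the result is an immediate corollary once Theorem~\ref{kbykconstruction} is available. The only thing that needs a moment of care is the bookkeeping with the two opposite monotonicity conventions for the $a$'s and the $b$'s, together with the (trivial) observation that passing to a submatrix preserves both monotonicities and positivity, so that the hypotheses of Theorem~\ref{kbykconstruction} continue to hold for $B$.
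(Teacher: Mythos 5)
Your proof is correct and matches the paper's intent: the paper states this as an immediate corollary of Theorem~\ref{kbykconstruction} without further argument, and your verification (each $k\times k$ submatrix inherits the required monotonicity and positivity of the $a$'s and $b$'s, so the determinant formula applies and all factors are positive) is exactly the omitted routine check.
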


Note that for $k=2$, using Theorem~\ref{kbykconstruction}, the construction yields an $n \times n$ $TP_2$ matrix.
We use that construction in order to show a lower bound on the number of equal $2 \times 2$ minors in an $n \times n$ $TP_2$ matrix.

Let $P \subset \Re^2$  be the set of $n^2$ points in the plane formed by the Cartesian product $A \times B$, where $A = \{a_1,\ldots,a_n\}$ and $B= \{b_1, \ldots, b_n\}$. Each axis-parallel rectangle with corners belonging to $P$ corresponds to a $2\times 2$ minor of the matrix and the value of that minor equals the area of the rectangle.

Indeed, fix a pair of row indices $i < j$ and a pair of column indices $k < l$ and consider the corresponding $2\times 2$ minor with value:

$$
(b_i+a_k)(b_j+a_l) - (b_i+a_l)(b_j+a_k) = (a_l-a_k)(b_i-b_j).
$$

Notice that this is exactly the area of an axis-parallel rectangle in the plane whose opposite corners are the points $(a_l,b_j)$ and $(a_k,b_i)$.

We use this relation in order to construct a matrix with many $2\times 2$ equal minors:
\begin{theorem}\label{lowerboundconstruction}
 Let $[n] = \{1,,\ldots,n\}$ and consider the $TP_2$ $n\times n$ matrix $A$ constructed as above (i.e., $A_{i,j} = i+j$).
 Then there are $n^{2+\frac{1}{O(\log\log n)}}$ equal $2\times 2$ minors in $A$.
\end{theorem}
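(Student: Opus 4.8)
The plan is to translate the problem into counting axis-parallel rectangles of a fixed area in the integer grid $[n]\times[n]$, and then to choose that area to be a number with as many divisors as possible. Recall the relation established just before the statement: for the matrix $A$ with $A_{i,j}=i+j$, the $2\times2$ minor with rows $i<j$ and columns $k<l$ has value (equivalently, absolute value) equal to $(j-i)(l-k)$, the area of the axis-parallel rectangle in $[n]\times[n]$ with opposite corners $(k,i)$ and $(l,j)$; conversely every such rectangle arises from exactly one $2\times2$ minor, namely take the two rows from its two $y$-coordinates and the two columns from its two $x$-coordinates. Hence, for a positive integer $N$, the number of $2\times2$ minors of $A$ equal to $N$ is exactly
\[
  R(N)=\sum_{\substack{d\mid N\\ d<n,\ N/d<n}}(n-d)\bigl(n-\tfrac Nd\bigr),
\]
since for each gap value $g\in\{1,\dots,n-1\}$ there are precisely $n-g$ pairs of coordinates in $[n]$ at distance $g$, and the row-gap and column-gap are chosen independently subject to having product $N$ (ordered factorizations, so $\tau(N)$ terms in all).

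First I would choose $N$ to be a primorial: let $N=p_1p_2\cdots p_k$ be the product of the first $k$ primes, with $k$ taken as large as possible subject to $N\le n/2$. Then every divisor $d$ of $N$ satisfies $d\le N\le n/2$, so that $n-d\ge n/2$ and likewise $n-N/d\ge n/2$, and all $\tau(N)=2^{k}$ ordered factorizations $N=d\cdot(N/d)$ are admissible inside the grid. Therefore
\[
  R(N)\ \ge\ 2^{k}\cdot\Bigl(\tfrac n2\Bigr)^{2}\ =\ \frac{n^{2}}{4}\,2^{k}.
\]
Next I would estimate $k$ via Chebyshev's bounds (or the prime number theorem): $\log N=\sum_{i\le k}\log p_i=\theta(p_k)=\Theta(p_k)$ and $p_k=\Theta(k\log k)$, so $\log N=\Theta(k\log k)$. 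Consequently $N\le n/2$ (indeed even $N\le\sqrt n$) holds for $k=\lceil c\log n/\log\log n\rceil$ with a suitable absolute constant $c>0$ and all large $n$; for such $k$ we get $2^{k}=2^{\Omega(\log n/\log\log n)}=n^{\Omega(1/\log\log n)}$, and hence
\[
  R(N)\ \ge\ \frac{n^{2}}{4}\cdot n^{\Omega(1/\log\log n)}\ =\ n^{\,2+\frac1{O(\log\log n)}},
\]
which is the asserted bound.

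The argument is short, and its only non-elementary ingredient is the classical fact that an integer of size at most $n$ can have as many as $n^{\Omega(1/\log\log n)}$ divisors, witnessed by primorials together with the Chebyshev/PNT estimate for $\theta(p_k)$. The only point that needs care is pure bookkeeping: one must ensure that every divisor of the chosen $N$ is small enough to produce a genuine rectangle inside the $n\times n$ grid (guaranteed by keeping $N\le n/2$), and that distinct ordered factorizations of $N$, and hence distinct minors, are being counted without repetition. If a cleaner constant were wanted one could instead take $N$ to be a highly composite number just below $n/2$ rather than a primorial, but this changes nothing essential.
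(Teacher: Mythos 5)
Your proposal is correct and follows essentially the same route as the paper: both reduce to counting axis-parallel rectangles of one well-chosen area in the grid $[n]^2$ and exploit an integer $N\le n/2$ with $n^{\Omega(1/\log\log n)}$ divisors, each divisor pair contributing $\Omega(n^2)$ rectangles. The only difference is cosmetic -- you make the number-theoretic input explicit via primorials and Chebyshev's bounds (and give an exact count $R(N)$), whereas the paper cites the classical maximal-order estimate for the divisor function and lower-bounds the count by restricting bottom-left corners to a quarter of the grid.
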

\begin{proof}
Let $G$ be the corresponding uniform grid $[n]^2$.
The number of times a rectangle of area $k\leq n/2$ appears in this grid is $\Omega(n^2 \times div(k))$ where $div(k)$ is the number of divisors of $k$. To see this, consider the $\frac{n^2}{4}$ choices of the bottom left corner of the rectangle, each one of the form $ (p,q) $, $1 \leq p \leq \frac{n}{2} $, $ 1 \leq q \leq \frac{n}{2}$. Once the bottom left corner is fixed, there are $div(k)$ ways of choosing the top right corner and once that is fixed, the other two corners are also fixed. Next, we use the following well known fact from number-theory ( see, e.g., Section 13.10 of \cite{Apostol}): There always exists a $k \leq n/2$ for which $div(k) = \Omega(n^{\frac{1}{\log\log n}})$ Such a $k$ gives the required bound.
\end{proof}
The above lower bound clearly serves as a lower bound on the number of equal $2 \times 2$ minors for $TP_2$ matrices.

Next, we turn our attention to provide a non-trivial upper bound on the number of equal $2\times 2$ minors in a $TP_2$ matrix constructed in such a way. That is, we provide an upper bound on the number of times an axis-parallel rectangle spanned by a (non-uniform) grid in the plane can have a given fixed area.
\begin{theorem}\label{uaub}
Let $A$ and $B$ be two sets of $n$ reals. Then the maximum number of times that a rectangle of unit area
is spanned by $A\times B$ is $O(n^{8/3})$.
\end{theorem}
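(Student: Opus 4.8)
The plan is to reduce the counting of unit-area rectangles spanned by $A \times B$ to an incidence problem between points and a suitable family of curves, in the spirit of the proof of Theorem~\ref{two_by_n}, and then apply the Szemer\'edi--Trotter-type bound for points and pseudolines. A rectangle of unit area spanned by the grid is determined by a pair $\{a_k, a_l\} \subseteq A$ and a pair $\{b_i, b_j\} \subseteq B$ with $(a_l - a_k)(b_i - b_j) = 1$; equivalently, writing $u = a_l - a_k$ and $v = b_j - b_i$, we need $uv = 1$ where $u$ is a difference realized in $A$ and $v$ is a difference realized in $B$. So the first step is to pass to the difference sets: let $D_A$ be the set of positive values $u$ such that $u = a_l - a_k$ for some $a_k < a_l$ in $A$, and similarly $D_B$; for each such $u$ let $r_A(u)$ be the number of representations, and likewise $r_B(v)$. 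Then the number of unit rectangles is exactly $\sum_{u \in D_A,\ uv=1} r_A(u)\, r_B(v) = \sum_{u}\, r_A(u)\, r_B(1/u)$, which is a correlation of the two representation functions along the hyperbola $uv = 1$.

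First I would set up the incidence formulation. Consider the $n$ points $P = \{(a_j, \text{something})\}$ — more precisely, I would take the point set to be $A$ (thought of as points on a line, or as $n$ points $(a_j, 0)$) together with $B$, and for each element create a curve. The cleanest route: for a fixed value of the difference in $B$, say $v = b_j - b_i$, the equation $(a_l - a_k) v = 1$ becomes a linear relation $a_l - a_k = 1/v$; summing the number of pairs $(k,l)$ in $A$ with a prescribed difference over all the $O(n^2)$ differences occurring in $B$ would only give the trivial $O(n^3)$. To beat this one must use that the hyperbola $uv=1$ behaves like a pseudoline with respect to the relevant point set. I would instead dualize as follows: associate to each pair $(a_k, a_l)$, $a_k < a_l$, the point $(a_k, a_l) \in \Re^2$, giving at most $\binom{n}{2}$ points; associate to each pair $(b_i, b_j)$, $b_i < b_j$, the curve $\{(x,y) : (y - x)(b_j - b_i) = 1\}$, i.e. the line $y - x = 1/(b_j - b_i)$. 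These are $\binom{n}{2}$ parallel lines, so the incidence count is again trivial. The non-trivial idea, which is the one actually needed, is to not expand both difference sets: fix the point set to be the $n$-point set $B$ (or $A$) and, for each of the $\binom{n}{2}$ pairs from the other set, produce a genuine curve whose incidences with single points of $B$ detect the unit-area condition. Concretely, for a pair $\{a_k, a_l\}$ with $a_k < a_l$, and a pair $\{b_i, b_j\}$ with $b_i < b_j$, the condition $(a_l - a_k)(b_j - b_i) = 1$ is symmetric, so I would take $m = \binom{n}{2} = O(n^2)$ "points" coming from $A$-pairs and $m' = \binom{n}{2} = O(n^2)$ "lines" coming from $B$-pairs (all horizontal/parallel in the naive encoding). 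To make this a real pseudoline arrangement I map each $A$-pair $\{a_k,a_l\}$ to the point $P_{k,l} = (a_k, a_l)$ and each $B$-pair $\{b_i,b_j\}$ to the curve $\gamma_{i,j} = \{(x,y): y = x + 1/(b_j-b_i)\}$; two such curves are parallel hence meet at most once (zero times), so $\Gamma = \{\gamma_{i,j}\}$ is a family of pseudolines, and the number of unit rectangles equals $I(P,\Gamma)$ with $|P| = |\Gamma| = O(n^2)$. Applying the pseudoline incidence bound (the Clarkson et al.\ extension of Theorem~\ref{lines-inc}) with $N = N' = O(n^2)$ gives $I = O(N^{2/3}N'^{2/3} + N + N') = O((n^2)^{4/3}) = O(n^{8/3})$, which is exactly the claimed bound.

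The main obstacle — and the step that needs real care — is verifying that the pseudoline/incidence bound is actually applicable, because the point set $P$ and curve family $\Gamma$ are highly structured: the points $(a_k,a_l)$ all lie above the diagonal, and the curves are all translates of $y = x$. One must check that no degeneracy (such as many points lying on a single curve in a way forbidden by the general Szemer\'edi--Trotter statement) causes a problem — but in fact Theorem~\ref{lines-inc} and its pseudoline extension hold for arbitrary point sets and arbitrary pseudoline families with no extra hypothesis, so once we confirm that (i) distinct $B$-pairs with distinct difference values give distinct lines, and (ii) the $O(n^2)$ bound on the number of distinct curves survives (multiple $B$-pairs sharing the same difference should be collapsed to one curve, but then an incidence with that curve must be weighted by the number of collapsed pairs, so more carefully I would keep them as a multiset of $O(n^2)$ curves and note that the incidence bound for a multiset of pseudolines still holds with multiplicity, since one may perturb), the argument closes. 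A cleaner way around (ii) is to observe that if many $B$-pairs share a difference value $v$, then $v$ contributes $r_B(v)$ to the count with multiplicity, and $\sum_v r_B(v) = \binom n2$, so treating the curves with multiplicity and invoking the incidence bound on the underlying $O(n^2)$-element multiset is legitimate after an arbitrarily small perturbation that separates coincident curves without creating new incidences. Hence the upper bound $O(n^{8/3})$ follows.
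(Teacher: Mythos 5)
There is a genuine gap at the heart of your reduction. In your final configuration the curves $\gamma_{i,j}=\{(x,y):y=x+1/(b_j-b_i)\}$ are all translates of the line $y=x$, and two $B$-pairs $\{b_i,b_j\}$, $\{b_{i'},b_{j'}\}$ with the same difference produce the \emph{identical} line. So $\Gamma$ is really a multiset of parallel lines, and the quantity you need to bound is the multiplicity-weighted incidence count $\sum_{v} r_B(v)\,r_A(1/v)$. The Szemer\'edi--Trotter bound and its pseudoline extension require the curves to be distinct and count each point--curve incidence once; they say nothing about weighted counts (with $n$ copies of one line through $n$ points the weighted count is $\approx n^2$, far beyond $O(n^{4/3})$). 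Your proposed repair --- perturbing coincident curves ``without creating new incidences'' --- does not work: any perturbation of a line destroys the incidences it had with the fixed point set $\{(a_k,a_l)\}$, so the perturbed configuration no longer counts unit rectangles, and there is no way to retain the incidences while separating the curves without contradicting the incidence theorem itself. Moreover, even if all differences in $B$ were distinct, the curves are pairwise parallel, so each point $(a_k,a_l)$ lies on at most one of them and the honest incidence count is trivially at most $|P|=O(n^2)$; converting this back to the weighted count only gives the trivial $O(n^2)\cdot\max_v r_B(v)=O(n^3)$. In short, passing to the difference sets collapses the structure that a nontrivial incidence argument needs --- a point you yourself flagged earlier in the write-up (``these are $\binom{n}{2}$ parallel lines, so the incidence count is again trivial'') before reverting to essentially that same construction.

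The paper avoids this entirely by \emph{not} dualizing to differences: it keeps the $n^2$ grid points of $A\times B$ as the point set and, for each potential bottom-left corner $(a,b)\in A\times B$, takes the hyperbola $\gamma_{a,b}=\{(x,y):(x-a)(y-b)=1\}$. These $n^2$ curves are distinct (distinct $(a,b)$ give distinct translates of one hyperbola) and, being translates, any two of the relevant branches cross at most once, so they form a genuine family of $O(n^2)$ distinct pseudolines; each unit-area rectangle corresponds to a distinct incidence between its top-right corner and the hyperbola of its bottom-left corner, with no multiplicities. The pseudoline incidence bound with $O(n^2)$ points and $O(n^2)$ curves then gives $O\bigl((n^2\cdot n^2)^{2/3}\bigr)=O(n^{8/3})$. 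If you want to salvage your approach, you must find an encoding in which repeated difference values do not collapse to repeated curves --- which is precisely what the corner-based hyperbola encoding accomplishes.
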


\begin{proof}
 Let us fix the bottom left corner of a rectangle, that is let $a \in A$ and $b \in B$ be fixed.
 Consider all unit area rectangles $\mathcal{R}_{a,b}$ whose bottom-left corner is $(a,b)$. The loci of all points $(x,y)$ that are
 the upper-right corner of a rectangle in $\mathcal{R}_{a,b}$ lie on the hyperbola
 $$
 \gamma_{a,b} = \{(x,y): (x-a)(y-b)=1\}
 $$

 Let $\Gamma = \{\gamma_{a,b}| (a,b) \in A \times B\}$ be the set of $n^2$ hyperbolas. Note that the rectangle whose bottom-left corner is $(a,b)$ and whose top-right corner is $(c,d)$, has area $1$ if and only if the point $(c,d)$ is incident with the hyperbola $\gamma_{a,b}$. That is, the number of such unit area rectangles is bounded from above by the number of incidences between the $n^2$ points in $A\times B$ and $\Gamma$. These $n^2$ hyperbolas of $\Gamma$ are translates of each other and hence form a family of pseudolines.
 Since $O(n^2)$ pseudolines and $O(n^2)$ points may have at most $O( (n^2\cdot n^2)^{2/3})$ incidences, we get the asserted bound.
 \end{proof}

 \begin{remark}
 This upper bound does not apply to the number of repeated $2 \times 2$ minors of an arbitrary $TP_2$ matrix but rather to the class of matrices constructed as grid matrices. Nevertheless, we believe that the problem of providing sharp asymptotic bounds on the number of unit area rectangles in grid matrices is very
 interesting and non-trivial. Note also that in order to obtain our upper bound we reduce our problem to that of bounding the number of incidences between a (not necessarily uniform) grid with $n^2$ points and a family of $n^2$ translates of a hyperbola. The bound $O(n^{8/3})$ that we get follows from the fact that those hyperbolas form a family of pseudolines. For pseudolines, this bound can be matched with a lower bound by modifying the construction of Elekes \cite{Elekes-2001} for a lower bound on the number of incidences between points and lines (see also the survey \cite{PachSharir}). The grid in Elekes' construction is rectangular but it can be easily turned into a square grid by appropriately cutting the longer side into pieces and putting them together along the shorter side. The lines don't remain straight any more but we can bend them in such a way that they remain pseudolines. Figure \ref{fig:elekes} illustrates the idea.

   \begin{figure}
     \centering
     \includegraphics[scale=0.70]{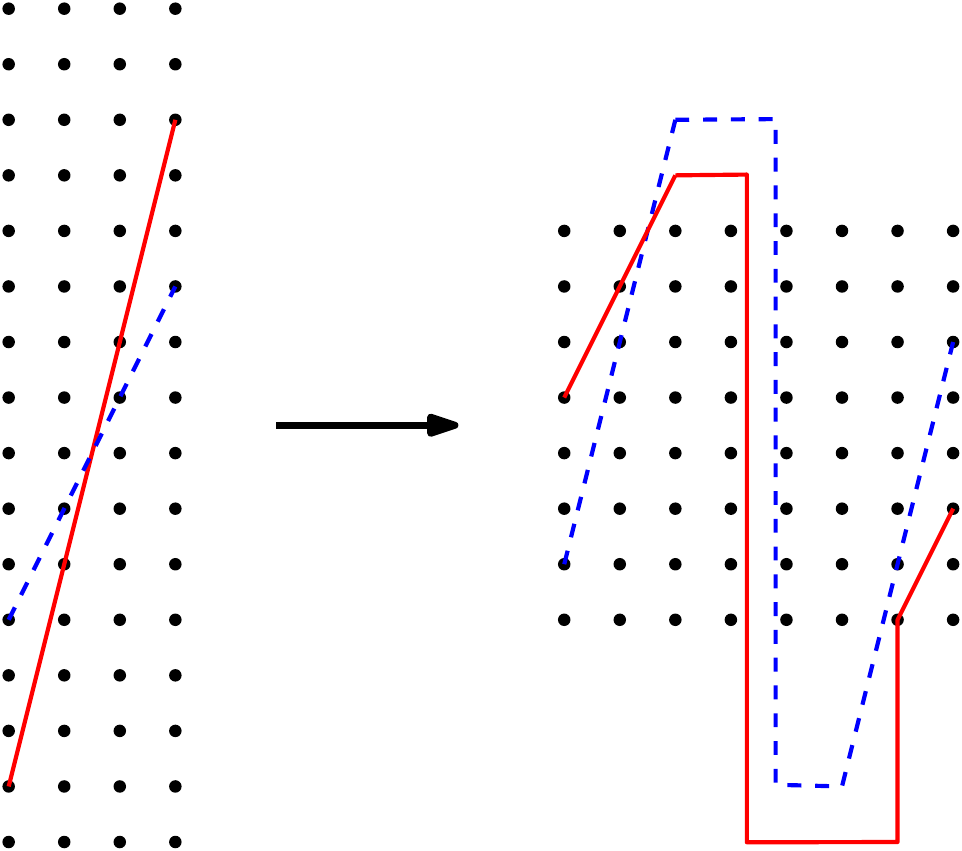}
     \caption{Turning the rectangular grid into a square grid in Elekes' construction}
     \label{fig:elekes}
   \end{figure}
 % \shakhar{I am cheating here because in the grid construction for n points and n lines one uses the grid dimensions $n^{1/3}$ and $n^{2/3}$ so I am not sure how we can overcome this...Saurabh?}
 \end{remark}

In the proof of Theorem~\ref{uaub} we do not use the fact that the point set is a cartesian product of two sets of reals. Therefore, our proof technique provides the following more general theorem:
\begin{theorem}
Let $P$ be a set of $n$ points in the plane. Then $P$ determines at most $O(n^{4/3})$ unit-area and axis-parallel rectangles.
\end{theorem}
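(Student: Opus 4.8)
The plan is to mimic the proof of Theorem~\ref{uaub}, but observe that the Cartesian-product structure was never actually used there. First I would fix a point $p = (a,b) \in P$ to serve as the bottom-left corner of a prospective unit-area axis-parallel rectangle. As in the proof of Theorem~\ref{uaub}, the locus of all points $(x,y)$ that can serve as the opposite (top-right) corner of a unit-area rectangle with bottom-left corner $p$ is exactly the hyperbola
$$
\gamma_p = \{(x,y) : (x-a)(y-b) = 1\}.
$$
Let $\Gamma = \{\gamma_p : p \in P\}$; this is a family of $n$ curves, each a translate of the fixed hyperbola $xy = 1$.

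The key observations are the same two used before. First, a given unit-area axis-parallel rectangle $R$ is counted once in the incidence structure: if $R$ has bottom-left corner $p \in P$ and top-right corner $q \in P$, then $q$ lies on $\gamma_p$, so the number of such rectangles is at most the number of incidences $I(P, \Gamma)$ between the $n$ points of $P$ and the $n$ hyperbolas of $\Gamma$. (One should note that the bottom-left/top-right pair of a nondegenerate axis-parallel rectangle spanned by $P$ is determined, and the other two corners need not even lie in $P$ for our upper bound — we are only counting, so overcounting is harmless.) Second, since the curves in $\Gamma$ are all translates of one another, any two of them intersect in at most one point: if $\gamma_p$ and $\gamma_{p'}$ with $p=(a,b)$, $p'=(a',b')$ share two points, then subtracting the two defining equations $(x-a)(y-b)=1$ and $(x-a')(y-b')=1$ yields a single linear equation in $(x,y)$, which a hyperbola meets in at most one point unless the two hyperbolas coincide; and they coincide only if $p = p'$. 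Hence $\Gamma$ is a family of pseudolines in the sense of the definition in the introduction.

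Now I would invoke the pseudoline version of the Szemer\'edi--Trotter theorem (the Clarkson et al.\ extension cited after Theorem~\ref{lines-inc}): $n$ points and $n$ pseudolines determine $O(n^{2/3} n^{2/3} + n + n) = O(n^{4/3})$ incidences. This gives $I(P,\Gamma) = O(n^{4/3})$, and therefore the number of unit-area axis-parallel rectangles spanned by $P$ is $O(n^{4/3})$, as claimed. There is no real obstacle here: the entire content is recognizing that the argument in Theorem~\ref{uaub} degrades gracefully when the $n^2$-point grid is replaced by an arbitrary $n$-point set (so one uses $n$ hyperbolas instead of $n^2$), and that the pseudoline property of translated hyperbolas is exactly what the incidence bound needs. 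The only point requiring a moment's care is confirming that each unit-area rectangle corresponds to a genuine point--curve incidence and is not multiply counted in a way that would inflate the bound; since we only need an upper bound, even a bounded amount of overcounting would be acceptable, but in fact each rectangle determines its bottom-left corner $p$ and top-right corner $q$ uniquely, so it contributes exactly one incidence $(q, \gamma_p)$.
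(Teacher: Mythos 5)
Your proposal is correct and matches the paper's own reasoning: the paper proves this theorem precisely by observing that the argument of Theorem~\ref{uaub} never uses the Cartesian-product structure, so one takes the $n$ translated hyperbolas $\gamma_p$ for $p\in P$, notes they form a family of pseudolines, and applies the pseudoline Szemer\'edi--Trotter bound to $n$ points and $n$ curves to get $O(n^{4/3})$. Your added verification that two distinct translates intersect at most once (via subtracting the defining equations) is a nice explicit justification of the pseudoline property that the paper only asserts.
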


Notice also that the lower bound construction from Theorem~\ref{lowerboundconstruction} with $[\lfloor \sqrt n \rfloor ]^2$ in fact provides the following lower bound for unit-area axis-parallel rectangles spanned by a set of $n$ points:
\begin{theorem}
The maximum possible number of unit-area and axis-parallel rectangles spanned by $n$ points in the plane is $n^{1+\frac{1}{O(\log\log\sqrt n)}}$
\end{theorem}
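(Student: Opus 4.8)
The plan is to transfer the construction behind Theorem~\ref{lowerboundconstruction} from a uniform square grid to a set of $n$ arbitrary points, and to pass from rectangles of a cleverly chosen area $k$ to rectangles of \emph{unit} area by a coordinate rescaling that preserves the combinatorial count.

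First I would set $m = \lfloor \sqrt{n}\,\rfloor$ and consider the uniform grid $G = [m]^2$, a set of $m^2 \le n$ points. Exactly as in the proof of Theorem~\ref{lowerboundconstruction}, there is an integer $k \le m/2$ with $div(k) = \Omega\!\left(m^{1/\log\log m}\right)$, and $G$ spans $\Omega\!\left(m^2 \cdot div(k)\right) = m^{\,2 + \frac{1}{O(\log\log m)}}$ axis-parallel rectangles of area exactly $k$ (choose the bottom-left corner in $[\lfloor m/2\rfloor]^2$, then the top-right corner in one of $div(k)$ ways, and the remaining two corners are determined). Next, apply the linear bijection $\varphi\colon (x,y)\mapsto (x/k,\,y)$ of the plane to $G$: since $\varphi$ takes horizontal lines to horizontal lines and vertical lines to vertical lines, it sends axis-parallel rectangles to axis-parallel rectangles, and it scales every area by the factor $1/k$. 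Hence $\varphi(G)$ is a set of $m^2$ points spanning $m^{\,2 + \frac{1}{O(\log\log m)}}$ axis-parallel rectangles of area exactly $1$. If $m^2 < n$, I would add the remaining $n - m^2$ points in arbitrary position; placing extra points can only create more unit-area rectangles, so we obtain a set $P$ of exactly $n$ points with at least $m^{\,2 + \frac{1}{O(\log\log m)}}$ unit-area axis-parallel rectangles.

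Finally I would translate the bound back into terms of $n$: since $m = \lfloor\sqrt{n}\,\rfloor$ we have $m^2 = \Theta(n)$ and $\log\log m = \log\log\sqrt{n} + O(1)$, so $m^{\frac{1}{O(\log\log m)}} = n^{\frac{1}{O(\log\log \sqrt{n})}}$ (using $m = n^{1/2}$), whence the count is $n^{\,1 + \frac{1}{O(\log\log\sqrt{n})}}$, as asserted. The only genuine content is the rescaling step $\varphi$, which is what legitimizes converting the "many divisors" argument — which inherently produces rectangles of a non-unit area — into a statement about unit-area rectangles; the rest (the padding to exactly $n$ points, and tracking the exponent through $m=\sqrt{n}$) is routine, so I do not anticipate a real obstacle.
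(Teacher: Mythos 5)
Your proposal is correct and follows essentially the same route as the paper, which simply invokes the grid construction of Theorem~\ref{lowerboundconstruction} on $[\lfloor\sqrt n\rfloor]^2$ and leaves the passage from area~$k$ to area~$1$ implicit. Your explicit rescaling $(x,y)\mapsto(x/k,y)$, the padding to exactly $n$ points, and the change of variable $m=\lfloor\sqrt n\rfloor$ are exactly the routine steps needed to make that remark into a complete argument.
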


 \section{Discussion and open problems}

The paper ~\cite{TPequal} studies the maximum possible number of repeated entries in an $n \times n$ $TP_2$-matrix
and asks what can be said about the maximum possible number of repeated $2 \times 2$ minors in a $d \times n$ $TP$-matrix. This problem is open already for $d=2$.
In this paper we initiate the study of repeated minors and provide several bounds. We completely settle the case $d=2$ and also provide non-trivial upper bounds on repeated $d \times d$ minors in a $d \times n$ TP-matrix. We show that higher dimensional geometric incidence results come into play.

We also study the problem of repeated $2 \times 2$ minors in a special class of  $n \times n$ TP-matrices.
This special class seems to be interesting in its own right due to its additive combinatorics flavor:
Consider the problem of repeated $2 \times 2$ minors in the special setting of Section~\ref{construction}.
Let us state the problem in a slightly different way to see that the problem has a flavor of additive combinatorics. For any two multisets $C$ and $D$, let us define $C - D$ to be the multiset $S$ where each element of $S$ is the difference of an element in $C$ and an element in $D$.
The multiplicity of any $s \in S$ is $m(s)= \sum_{c\in C, d \in D, c-d = s} m(c)\cdot m(d)$ where $m(x)$ is the multiplicity of $x$. Similarly define $C\cdot D$ to be the set $S'$ where each element in $S'$ is the product of an element in $C$ and an element in $D$ and the multiplicity of $s \in S'$ is $m(s) = \sum_{c\in C, d \in D, c\cdot d = s} m(c)\cdot m(d)$.
 For any multiset $C$ define $\mu(C)$ to be the maximum multiplicity of any element in $C$. We treat any set $C$ as a multiset where the multiplicity of each element is $1$.

Theorem~\ref{uaub} can then be rephrased as follows: for any sets $A$ and $B$ of $n$ reals each, $\mu((A-A)\cdot (B-B)) = O(n^{8/3})$. We believe that the correct bound is $O(n^{2+o(1)})$ but improving the $O(n^{8/3})$ seems difficult even for special cases. For instance, if we fix the set $B$ to be $\{1,\ldots, n\}$ then the problem boils down to showing that the sum of multiplicities of elements in $x \in A-A$ s.t. $1/x  \in \{1,\cdots,n\}$ is $O(n^{1+o(1)})$. Proving this special case may involve interesting number theoretic and graph theoretic techniques.

The special type of $TP_2$-matrices introduced in Section~\ref{construction} lead us to the following open problem which is of independent interest. Provide sharp asymptotic bound on the maximum possible number of unit-area and axis-parallel rectangle that are spanned by a pair of points in an $n$ point set in the plane.
To the best of our knowledge this fascinating problem has not been studied before.
In Section~\ref{construction} we provide an upper bound of $O(n^{4/3})$ and a lower bound of $n^{1+\frac{1}{O(\log\log\sqrt n)}}$.
We note that both our upper bound and lower bounds are similar to the best known upper and lower bounds for, yet another, classical problem in discrete geometry, the so-called Erd{\H o}s repeated distances problem. It might be the case that there is a direct relation between these two functions.

\end{document}